\newcommand{\eps}{\varepsilon}
\renewcommand{\phi}{\varphi}
\renewcommand{\le}{\leqslant}
\renewcommand{\ge}{\geqslant}
\renewcommand{\a}{\alpha}
\renewcommand{\b}{\beta}
\newcommand{\mC}{\mathcal{C}}
\newcommand{\N}{\mathbb{N}}
\newcommand{\s}{\sigma}
\newcommand{\Leb}{\mathop{\mathrm{Leb}}}
\newcommand{\Lip}{\mathop{\mathrm{Lip}}}
\newcommand{\interior}{\mathop{\mathrm{Int}}}
\newcommand{\dist}{\mathop{\mathrm{dist}}}
\newcommand{\bbN}{\mathbb{N}}
\newcommand{\const}{\mathop{\mathrm{const}}}
\newtheorem{proposition}{Proposition}
\newtheorem{lemma}{Lemma}
\theoremstyle{definition}
\newtheorem*{remark*}{Remark}
\newtheorem*{theorem*}{Theorem}
\newtheorem{definition}{Definition}
\newtheorem{definition*}{Definition}
\title{Example of a diffeomorphism\\
for which the special ergodic theorem doesn't hold}
\author{Dmitry Ryzhov\footnote{Chebyshev Laboratory (Department of Mathematics and Mechanics, St.-Petersburg State University, Russia)}}
\begin{document}
\maketitle
\begin{abstract}
In this work we present an example of $C^\infty$-diffeomorphism of a compact $4$-manifold
such that it admits a global SRB measure but for which the special ergodic theorem doesn't hold. Namely, for this transformation there exist a continuous function $\varphi$ and a positive constant $\a$ such that the following holds: the set 
of the initial points for which the Birkhoff time averages of the function~$\varphi$ differ from its $\mu$--space average by at least~$\a$ has zero Lebesgue measure but full Hausdorff dimension.
\end{abstract}

\section*{Introduction}
Let us recall the basic concepts and the notion of the special ergodic theorem, and then state our result. For more detailed survey of the subject we refer the reader to the work~\cite{KR} and to references therein.

\subsection{SRB measures and special ergodic theorems}
Let $M$ be a compact Riemannian manifold (equipped with the Lebesgue measure), and $f:M\to M$ be its transformation with an invariant measure $\mu$. For a function $\phi\colon M\to \mathbb R$ and a point $x\in M$ we denote its $n$-th time average at $x$ by
\begin{equation}
\phi_n(x):=\frac{1}{n}\sum_{k=0}^{n-1} \phi\circ f^k(x),
\end{equation}
and the space average of $\phi$ by
\begin{equation}
\bar\phi:=\int_M \phi\,d\mu.
\end{equation}

\begin{definition}
An invariant probability measure $\mu$ is called a \emph{(global) SRB measure} for $f:M\to M$ if for any continuous function $\phi$ and for Lebesgue-almost every $x\in M$, the time averages of $\phi$ at $x$ tend to the space average of $\phi$:
\begin{equation}\label{eq:time-averages}
\lim\limits_{n\to\infty}\phi_n (x)=\bar{\phi}.
\end{equation}
\end{definition}

Taking a continuous test-function $\phi\in C(M)$ and any $\a\ge 0$, we define the set of ($\phi,\a$)-nontypical points as
\begin{equation*}
	K_{\phi,\a}:=\left\{ x\in X\colon \varlimsup\limits_{n\to\infty} |\phi_n(x)-\bar\phi|>\a \right\}.
\end{equation*}

By definition, if $\mu$ is a global SRB measure, then $\Leb(K_{\phi,0})=0$.

\begin{definition}\label{def:spec-erg}
Let $\mu$ be a global SRB measure of $f$. Say that the \emph{special ergodic theorem} holds for $(f,\mu)$, if for every continuous function $\varphi\in C(M)$ and every $\a>0$ the Hausdorff dimension of the set $K_{\phi,\a}$ is strictly less than the dimension of the phase space:
\begin{equation}\label{eq:set}
	\forall \varphi\in C(M), \alpha>0 \qquad \dim_H K_{\phi,\a}<\dim M.
\end{equation}
\end{definition}

Our interest to the special ergodic theorem is related to its possible applications for studying perturbations of skew products, see, for example,~\cite{IKS}.

In~\cite{IKS} the special ergodic theorem was proved for a doubling map of a circle, in~\cite{Saltykov} for linear Anosov diffeomorphisms of two-dimensional torus, and in~\cite{KR} for all transformations for which the so-called dynamical large deviation principle holds (in particularly, for all $C^2$-uniformly hyperbolic maps with a transitive attractor).

\subsection{The counterexample}
All known sufficient properties that are required from dynamical systems to satisfy the special ergodic theorem (SET) are quite restrictive. Thus one may expect that the SET holds not for every system. The aim of the present work is to present such an example.

\begin{theorem*}
There exists a $C^\infty$-diffeomorphism of a compact $4$-manifold 
such that it admits a global SRB measure, but the special ergodic theorem doesnt'n hold for it.
\end{theorem*}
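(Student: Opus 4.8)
The plan is to realize the example through an \emph{intermittency} (slow-down) mechanism, which destroys the exponential bound on the probabilities of large deviations of Birkhoff sums and thereby lets the nontypical set be far larger than in the uniformly hyperbolic case, while still keeping a genuine global SRB measure. Concretely, I would start from a uniformly hyperbolic model carrying an SRB measure --- for instance a linear Anosov automorphism of $\mathbb{T}^2$, or its realization as an attracting solenoid --- and perturb it near one fixed point $p$ so that $p$ becomes parabolic (all eigenvalues of the derivative equal to $1$), in the spirit of the Pomeau--Manneville and Katok constructions; the four dimensions serve to retain a genuinely hyperbolic part of the dynamics (which guarantees an SRB measure and a.e.\ equidistribution), to realize the slow-down in the remaining directions, and ultimately to ``fatten'' the nontypical set transversally. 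One fixes once and for all the slow-down exponent $\gamma\in(0,1)$ so that the resulting map $f\colon M\to M$ is a $C^\infty$ diffeomorphism of a closed $4$-manifold admitting a global SRB measure $\mu$ and so that the only non-summable source of recurrence is the parabolic orbit through $p$. The quantitative feature to extract at this stage is that the cost of staying near $p$ is only \emph{polynomial}: the Lebesgue measure of the set of points whose orbit spends at least a fixed proportion of its first $n$ iterates in a small neighbourhood of $p$ decays like a negative power of $n$, not exponentially --- i.e.\ the dynamical large deviation principle fails for $f$.

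The global SRB property is the easy half. The hyperbolic carrier carries a global SRB measure by the classical theory; the condition $\gamma<1$ ensures that the slow-down does not turn the relevant invariant measure into an infinite one and that the system is ergodic (in fact exact on the intermittent factor), so that for every $\phi\in C(M)$ the time averages $\phi_n(x)$ converge to $\bar\phi$ for Lebesgue-a.e.\ $x\in M$. In particular $\Leb(K_{\phi,0})=0$, hence $\Leb(K_{\phi,\a})=0$ for every $\a>0$, which is the first requirement of the theorem.

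The heart of the matter is to exhibit $\phi\in C(M)$ and $\a>0$ with $\dim_H K_{\phi,\a}=\dim M=4$. Take $\phi$ to be a nonnegative bump supported in a small neighbourhood of $p$ with $\phi(p)=1$, and fix $\a$ slightly below $1-\bar\phi>0$. I would then build a Moran-type Cantor subset $C\subseteq K_{\phi,\a}$ as follows. Choose a rapidly growing sequence of ``checkpoint'' lengths $n_1\ll n_2\ll\cdots$ and auxiliary lengths $T_k$ that are negligible compared to $n_{k+1}$ but large compared to $\log n_{k+1}$. Between the $(k-1)$-st and $k$-th checkpoints, let the orbit move freely through the uniformly hyperbolic region for $T_k$ steps --- which produces of the order of $e^{cT_k}$ pairwise separated admissible pieces --- and then steer every such piece into a neighbourhood of $p$ of radius $\sim n_{k+1}^{-1/\gamma}$, so chosen that the orbit stays in a fixed small neighbourhood of $p$ for about $n_{k+1}$ further steps before escaping. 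Since $\sum_{j\le k}T_j\ll n_{k+1}$ and the parabolic sojourns dominate the elapsed time, $\limsup_n\phi_n(x)$ is close to $\phi(p)=1$ for every $x\in C$, whence $C\subseteq K_{\phi,\a}$. To estimate $\dim_H C$, equip $C$ with the natural measure assigning equal mass to the pieces of each generation and apply the mass distribution principle: the number of pieces multiplies by $\approx e^{cT_k}$ per generation, whereas the diameters of the pieces shrink by a factor that is exponential in $T_k$ but only \emph{polynomial} in $n_{k+1}$ over the parabolic sojourn (this is precisely where intermittency enters: near $p$ the expansion over the $\sim n_{k+1}$ sojourn steps is polynomial, not exponential, in that number of steps). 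Because $\log n_{k+1}\ll T_k$, the ratio $\log(\#\text{pieces})/\log(1/\text{diameter})$ tends to $1$ along the hyperbolic directions, while along the remaining (contracting and parabolic) directions the trapping condition imposes no essential constraint, so there too the Cantor set keeps full local dimension. Hence $\dim_H C\ge s$ for every $s<4$, i.e.\ $\dim_H K_{\phi,\a}=4=\dim M$, and the special ergodic theorem fails.

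The main obstacle is the rigorous version of this last step: one must control distortion uniformly across the three regimes through which each transition passes --- free hyperbolic motion, the short approach to a prescribed small neighbourhood of $p$, and the long near-parabolic sojourn --- so that all pieces of a given generation really have comparable sizes and the mass distribution estimate survives at \emph{all} scales, not merely at the checkpoint scales, and this has to be done simultaneously in all four directions. In parallel, the parameters $\gamma$, the neighbourhood radii, and the sequences $(n_k)$, $(T_k)$ must be pinned down so that the two demands of the theorem hold at once: the slow-down must be strong enough to make the deviation cost subexponential and the Cantor set full-dimensional, yet weak enough that Lebesgue-a.e.\ orbit still equidistributes with respect to $\mu$. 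Finally, realizing this object honestly as a $C^\infty$ diffeomorphism of a \emph{closed} $4$-manifold --- in particular arranging genuine invertibility, smoothness including at the parabolic point, and globality of the SRB basin --- is a construction that has to be assembled with care, although along standard lines.
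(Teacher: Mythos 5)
Your proposal takes a genuinely different route from the paper. The paper does not perturb a hyperbolic system; it builds from scratch a map whose global SRB measure is a $\delta$-measure at a single sink $s_4$, and realizes the set $K_{\phi,\a}$ as an explicitly constructed Cantor set: first a family $C_P$ of zero-measure, full-dimensional Cantor subsets of an interval, then a ``sieving'' map on a square for which this Cantor structure arises dynamically, then a flow on a stratified $3$-manifold, and finally a $C^\infty$-flow on a genuine closed $4$-manifold whose time-one map is the desired diffeomorphism. The regularity/mass-distribution estimates (Proposition~\ref{mdp}) are applied directly to these explicit Cantor sets, and there is no hyperbolicity, intermittency, or distortion control anywhere in the argument. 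By contrast, you propose to start from an Anosov/solenoid carrier and impose a Pomeau--Manneville slow-down at one fixed point, so that the SRB measure is non-atomic and the polynomial escape rate near the parabolic orbit makes the dynamical large deviation principle fail; the full-dimensional nontypical set is then a Moran construction tuned so that long near-parabolic sojourns dominate the Birkhoff averages while hyperbolic free intervals supply exponentially many branches. If carried out, your construction would yield a less degenerate example (a non-atomic physical measure rather than a point mass) and would sit squarely inside the multifractal-analysis tradition; the paper's construction buys complete explicitness and a short, elementary verification at the cost of an attractor consisting of a single point.

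However, as written your argument is a plan rather than a proof, and you identify the central difficulties yourself without resolving them: the uniform distortion control across the free/approach/sojourn regimes, the simultaneous dimension estimate in all four directions, and the honest $C^\infty$ realization on a closed $4$-manifold with a globally attracting basin. One specific point deserves more care than the sketch gives it: the assertion that the trapping requirement imposes ``no essential constraint'' in the non-expanding directions. In the center/weakly-expanding direction near $p$, requiring the orbit to remain near $p$ for $\sim n_{k+1}$ iterates does select a branch whose diameter shrinks polynomially in $n_{k+1}$; whether this loss is truly negligible for the Hausdorff dimension estimate requires a quantitative comparison between $\sum_j T_j$ and $\sum_j \log n_{j+1}$, under the competing constraint $n_{k+1}\gg\sum_{j\le k}(T_j+n_j)$ needed to make the sojourns dominate the time averages. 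These two requirements can be reconciled (for instance $n_{k+1}=e^{n_k}$, $T_k\sim n_k^2$), but this must be made explicit, and the corresponding bounded-distortion statement for the four-dimensional transition maps is a substantial piece of work that the paper's approach simply sidesteps.
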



Our idea is to start with a set that has zero Lebesgue measure and full Hausdorff dimension, and then try to construct a transformation that has it as a set of $(\phi,\a)$-nontypical points for some test-function $\phi$ and some $\a>0$. For this purpose, we first describe a family of subsets of an interval that have zero Lebesgue measure and Hausdorff dimension $1$. We can construct only a discontinuous transformation such that the set from this family is $(\phi,\a)$-nontypical, so we increase the dimension from $1$ to $4$, consequently handling out the lack of continuity and smoothness of the map. This is done in the next four sections.

One can easily verify that the construction fails for typical perturbation of the system, and our example has infinite codimension in the space of all $C^\infty$-diffeomorphisms of the manifold. The existence of an open set of diffeomorphisms that do not satisfy the SET is a challenging open problem.

\subsection{Acknowledgements}
The author is very grateful to V.~Kleptsyn for sharing useful ideas and stimulating discussions. Despite of his impact, V.~Kleptsyn refused to be one of the authors of the paper. The author is also grateful to I.~Binder who taught him the method of regular measures, to Yu.~Ilyashenko for his advertence to the work and for numerous valuable comments, and to A.~Kustarev for his kind consultations about connected sums. The author is thankful to Cornell University for hospitality.

The author was supported by Chebyshev Laboratory (Department of Mathematics and Mechanics, St.-Petersburg State University) under the Russian Federation government grant 11.G34.31.0026, and partially by RFBR grant 10-01-00739-a and joint RFBR/CNRS grant 10-01-93115-CNRS-a.


\section{Dimension 1: discontinuous map of an interval}
For every sequence $P=\{p_n\}\in (0,1)^{\mathbb N}$ consider the Cantor set $C_P$, obtained from the interval $I=[0,1]$ by the standard infinite procedure of consecutive deleting the middle intervals. Namely, on the step number $n$ we take all the intervals that were obtained as a result of the previous steps (\textit{intervals of rank $n$}), and delete their central parts of relative length $p_n$, see Figure~1. In a particular case, when $p_n=1/3 \; \forall n\in\mathbb N$, we obtain the standard ''middle third'' Cantor set.

\begin{center}
\scalebox{1.2} 
{
\begin{pspicture}(0,-1.108125)(8.582812,1.108125)
\psline[linewidth=0.04cm](0.3009375,-0.0903125)(8.300938,-0.0903125)
\psdots[dotsize=0.16](0.3009375,-0.0903125)
\psdots[dotsize=0.16](8.280937,-0.0903125)
\psdots[dotsize=0.16](3.3009374,-0.0903125)
\psdots[dotsize=0.16](5.3009377,-0.0903125)
\psdots[dotsize=0.16](1.5009375,-0.0903125)
\psdots[dotsize=0.16](2.1009376,-0.0903125)
\psdots[dotsize=0.16](6.5009375,-0.0903125)
\psdots[dotsize=0.16](7.1009374,-0.0903125)
\usefont{T1}{ptm}{m}{n}
\rput(4.3723435,0.1396875){$p_1$}
\psdots[dotsize=0.16](0.7609375,-0.0903125)
\psdots[dotsize=0.16](1.0409375,-0.0903125)
\psdots[dotsize=0.16](2.5609374,-0.0903125)
\psdots[dotsize=0.16](2.8409376,-0.0903125)
\psdots[dotsize=0.16](5.7609377,-0.0903125)
\psdots[dotsize=0.16](6.0409374,-0.0903125)
\psdots[dotsize=0.16](7.5609374,-0.0903125)
\psdots[dotsize=0.16](7.8409376,-0.0903125)
\psline[linewidth=0.08cm](1.1009375,-0.0903125)(1.5009375,-0.0903125)
\psline[linewidth=0.08cm](2.1409376,-0.0903125)(2.5409374,-0.0903125)
\psline[linewidth=0.08cm](2.8209374,-0.0903125)(3.2609375,-0.0903125)
\psline[linewidth=0.08cm](5.3009377,-0.0903125)(5.7009373,-0.0903125)
\psline[linewidth=0.08cm](6.0609374,-0.0903125)(6.4609375,-0.0903125)
\psline[linewidth=0.08cm](7.1009374,-0.0903125)(7.5409374,-0.0903125)
\psline[linewidth=0.08cm](7.8809376,-0.0903125)(8.220938,-0.0903125)
\psline[linewidth=0.08cm](0.3009375,-0.0903125)(0.7009375,-0.0903125)
\usefont{T1}{ptm}{m}{n}
\rput(4.3723435,-0.8803125){$p_2$}
\usefont{T1}{ptm}{m}{n}
\rput(4.3723435,0.9196875){$p_3$}
\psline[linewidth=0.04cm,arrowsize=0.05291667cm 2.0,arrowlength=1.4,arrowinset=0.4]{<-}(0.8809375,0.0296875)(4.1009374,0.8896875)
\psline[linewidth=0.04cm,arrowsize=0.05291667cm 2.0,arrowlength=1.4,arrowinset=0.4]{<-}(7.7009373,0.0296875)(4.6809373,0.8896875)
\psline[linewidth=0.04cm,arrowsize=0.05291667cm 2.0,arrowlength=1.4,arrowinset=0.4]{<-}(2.7209375,-0.0103125)(4.1209373,0.7496875)
\psline[linewidth=0.04cm,arrowsize=0.05291667cm 2.0,arrowlength=1.4,arrowinset=0.4]{<-}(5.9009376,-0.0103125)(4.6209373,0.7096875)
\psline[linewidth=0.04cm,arrowsize=0.05291667cm 2.0,arrowlength=1.4,arrowinset=0.4]{<-}(6.7809377,-0.1503125)(4.7409377,-0.9303125)
\psline[linewidth=0.04cm,arrowsize=0.05291667cm 2.0,arrowlength=1.4,arrowinset=0.4]{<-}(1.7809376,-0.1503125)(4.0609374,-0.9303125)
\usefont{T1}{ptm}{m}{n}
\rput(0.23234375,-0.3603125){$0$}
\usefont{T1}{ptm}{m}{n}
\rput(8.272344,-0.3603125){$1$}
\end{pspicture} 
}
\\
Fig. 1: first three steps of constructing the set $C_P$
\end{center}

Let $\mathcal P\subset [0,1]^{\mathbb N}$ denote the set of the sequences $P=(p_n)$
such that:

(i) $\lim_{n\to\infty} p_n=0$;

(ii) $\sum^\infty_{n=1} p_n=\infty$.


\begin{lemma}\label{1}
For every $P\in\mathcal P$ the set $C_P$ has zero Lebesgue $1$-measure and Hausdorff dimension $1$.
\end{lemma}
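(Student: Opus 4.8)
The plan is to split the statement into two parts: the Lebesgue measure claim and the Hausdorff dimension claim. For the Lebesgue measure, I would track the total length $\ell_n$ of the union of the intervals of rank $n$. At each step we remove the central portion of relative length $p_n$ from every rank-$n$ interval, so $\ell_{n+1} = (1-p_n)\ell_n$, whence $\ell_n = \prod_{k=1}^{n}(1-p_k)$. The set $C_P$ is the nested intersection of these unions, so $\Leb(C_P) = \lim_{n\to\infty}\prod_{k=1}^{n}(1-p_k)$. Using $\log(1-p_k) \le -p_k$ together with condition (ii), $\sum p_k = \infty$, the logarithm of the product diverges to $-\infty$, so the product tends to $0$; hence $\Leb(C_P)=0$. (Condition (i) is not even needed here.)

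For the Hausdorff dimension, the natural approach is to exhibit, for every $s<1$, a probability measure supported on $C_P$ whose $s$-energy is finite, or more elementarily to use the mass distribution principle (Frostman's lemma): put the uniform Cantor measure $\nu$ on $C_P$ giving each of the $2^n$ rank-$n$ intervals equal mass $2^{-n}$. Each rank-$n$ interval has length $\delta_n := 2^{-n}\prod_{k=1}^{n}(1-p_k)$. A ball of radius $r$ with $\delta_{n+1}\le r \le \delta_n$ meets a bounded number of rank-$n$ intervals, so $\nu(B(x,r)) \le C\cdot 2^{-n}$. To run the mass distribution principle for exponent $s$ we need $2^{-n} \le \mathrm{const}\cdot r^s$, and since $r\ge \delta_{n+1} = 2^{-(n+1)}\prod_{k=1}^{n+1}(1-p_k)$ it suffices that $2^{-n}$ be bounded by a constant times $\bigl(2^{-n}\prod_{k=1}^{n}(1-p_k)\bigr)^s$, i.e. that $2^{-n(1-s)} \le \mathrm{const}\cdot \bigl(\prod_{k=1}^{n}(1-p_k)\bigr)^s$. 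Taking logarithms, this is the inequality $n(1-s)\log 2 \ge -s\sum_{k=1}^{n}\log(1-p_k) + O(1)$, which, for fixed $s<1$, holds for all $n$ provided $-\log(1-p_k) \to 0$, i.e. provided $p_k\to 0$ — this is exactly where condition (i) enters. Concretely: since $p_k\to 0$, for any $\eps>0$ we have $-\log(1-p_k) \le \eps$ for $k$ large, so $-\sum_{k=1}^n\log(1-p_k) \le \eps n + O(1)$; choosing $\eps$ small enough that $s\eps < (1-s)\log 2$ gives the required bound. The mass distribution principle then yields $\dim_H C_P \ge s$ for every $s<1$, hence $\dim_H C_P = 1$ (the upper bound $\le 1$ being trivial since $C_P\subset[0,1]$).

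The only mildly delicate point is bookkeeping the indices $n$ versus $n+1$ in the ball-radius estimate, and making sure the "bounded number of rank-$n$ intervals" claim is honest — but a ball of radius $r\le\delta_n$ can intersect at most, say, three consecutive rank-$n$ intervals (two if we are careless, three to be safe), since the gaps between rank-$n$ intervals at the same "parent" are comparable to $\delta_n$ while gaps across parents are larger; so $\nu(B(x,r))\le 3\cdot 2^{-n}$ suffices. I expect the main obstacle, such as it is, to be purely organizational: cleanly choosing $\eps=\eps(s)$ and absorbing the additive constant, rather than any real difficulty. Both conditions (i) and (ii) defining $\mathcal P$ are used exactly once — (ii) to kill the Lebesgue measure, (i) to keep the dimension equal to $1$.
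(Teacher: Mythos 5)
Your proof is correct and takes essentially the same route as the paper: both kill the Lebesgue measure via $\prod(1-p_k)=0$ (from condition (ii)), and both apply the mass distribution principle to the uniform Bernoulli $(1/2,1/2)$ measure on $C_P$, using condition (i) to bound $\nu(I)/|I|^s$ uniformly over construction intervals. The paper phrases the key estimate in terms of \emph{$\gamma$-regular measures} and shows the ratio $2^{-n}/\lambda_n^\gamma$ is eventually decreasing, while you make the equivalent $\eps$-bookkeeping explicit, but the underlying computation is identical.
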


Though the Lemma is intuitively clear, 
we give an accurate proof. It makes us the concept of $\gamma$-regular measures, that will be used several times in our proofs.

\begin{definition}
A measure $\mu$ on a Riemannian manifold is called $\gamma$-regular, if there exist constants $c,\delta>0$ such that
\begin{equation}\label{eq:reg}
	\mu(U)<c\cdot|U|^\gamma
\end{equation}
for every measurable set $U$ with $|U|<\delta$ (here $|\cdot|$ stands for the diameter of a set).
\end{definition}

\begin{proposition}[\cite{Falc}, Mass Distribution principle]\label{mdp}
If $\text{supp }\mu\subset F$ for some probability $\gamma$-regular measure $\mu$, then $\dim_H F\ge \gamma$.
\end{proposition}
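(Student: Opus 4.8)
The final statement is the classical \emph{Mass Distribution Principle}, so the plan is simply to reproduce its short proof, converting the hypothesized upper bound on $\mu$-measures of small sets into a lower bound for the $\gamma$-dimensional Hausdorff measure of $F$.

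First I would note that $\mu(F)=1$. Since the ambient Riemannian manifold is second countable, the support of the Borel probability measure $\mu$ has full mass, $\mu(\text{supp }\mu)=1$; as $\text{supp }\mu\subset F$ by hypothesis, this gives $\mu(F)\ge\mu(\text{supp }\mu)=1$, and $\mu(F)=1$ because $\mu$ is a probability measure.

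Next, fix $\delta>0$ as in the definition of $\gamma$-regularity and let $\{U_i\}_{i\in\N}$ be an arbitrary countable cover of $F$ by sets of diameter less than $\delta$. Combining countable subadditivity of $\mu$ with the regularity estimate \eqref{eq:reg} applied to each $U_i$, I get
\[
1=\mu(F)\le\sum_{i\in\N}\mu(U_i)<c\sum_{i\in\N}|U_i|^\gamma ,
\]
hence $\sum_i|U_i|^\gamma>1/c$ for every such cover. Taking the infimum over all covers of $F$ whose pieces have diameter less than $\delta$ yields $\mathcal H^\gamma_\delta(F)\ge 1/c$, and letting $\delta\to 0$ gives $\mathcal H^\gamma(F)\ge 1/c>0$ for the $\gamma$-dimensional Hausdorff measure of $F$. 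Since $\mathcal H^\gamma(F)>0$, the definition of Hausdorff dimension as $\dim_H F=\inf\{s\ge 0:\mathcal H^s(F)=0\}$ forces $\dim_H F\ge\gamma$, which is the claim.

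The argument is entirely routine, and the only mild technical point is that a cover $\{U_i\}$ used in the definition of Hausdorff measure need not consist of $\mu$-measurable sets; this is harmless, since replacing each $U_i$ by its closure does not change its diameter and can only increase its $\mu$-measure, so \eqref{eq:reg} (stated for measurable sets) still applies. There is therefore no genuine obstacle, which is why the statement is quoted from \cite{Falc} rather than proved from scratch.
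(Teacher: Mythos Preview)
Your proof is correct and follows essentially the same route as the paper's: both use the regularity estimate to bound $\sum_i |U_i|^\gamma \ge \mu(F)/c = 1/c$ for any fine cover of $F$, concluding that the $\gamma$-Hausdorff measure is positive. Your write-up is simply more detailed (spelling out why $\mu(F)=1$, passing explicitly through $\mathcal H^\gamma_\delta$, and handling the measurability of the $U_i$), while the paper compresses the argument into a single displayed inequality.
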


\begin{proof}
For $\eps$ small enough, $\gamma$-volume of any cover of $F$ by balls of diameter less than $\eps$ is uniformly bounded away from zero:
$$
\sum |U_i|^\gamma \ge \frac{\sum \mu(U_i)}{c}\ge \frac{\mu(F)}{c}=\frac{1}{c}>0.
$$
\end{proof}

\begin{proof}[Proof of Lemma~\ref{1}]
Let us prove that $C_P$ has Lebesgue measure zero.
Indeed, the second property $\sum^\infty_{n=1} p_n=\infty$ implies that
$$
\sum^\infty_{n=1} (-\ln (1-p_n))=\infty.
$$
Therefore $\prod^\infty_{n=1} (1-p_n)=0$ and hence $\Leb(C_P)=0$.

To prove that $\dim_H K_P=1$, consider the standard probability measure $\mu_{P}$ on $C_P$ (that is a pullback of the Bernoulli $(1/2,1/2)$-measure due to natural encoding of $C_P$ by right-infinite sequences of zeroes and ones).



Let us verify that for every $0<\gamma<1$ the measure $\mu_P$ is $\gamma$-regular. That is, we need to prove the existence of a constant $c$ such that~\eqref{eq:reg} holds for every sufficiently small interval $U$. Note that we can suppose that
$U$ is an interval of rank $n$ for some $n$. Indeed, let
\begin{equation}\label{eq:lambdan}
\lambda_n=\prod\limits_{j=1}^n \frac{1-p_j}{2}
\end{equation}
be the length of any interval of rank $n$. Then for all sufficiently large $n$ the ratio $\lambda_n/\lambda_{n+1}$ is less than $3$. Hence one can contract and shift any interval $U$ of length between $\lambda_n$ and $\lambda_{n+1}$ to an interval of rank $n+1$ changing the suitable value of a constant $c$ in~\eqref{eq:reg} by no more than $3^\gamma$.

Any interval of rank $n$ has the length $\lambda_n$ 
and the $\mu_P$-measure $\frac{1}{2^n}$. As a consequence of the first property $\lim_{j\to\infty} p_j=0$, there exists $n\in\mathbb N$ such that $2^{\gamma-1}<(1-p_j)^\gamma$ for every $j>n$. Starting with this number $n$, the sequence
\begin{equation}\label{eq:regconst}
\frac{\mu_P (U_n)}{|U_n|^\gamma}=\frac{2^{-n}}{\lambda_n^\gamma}
=\frac{2^{n(\gamma-1)}}{\left(\prod_{j=1}^n 1-p_j)\right)^\gamma}
\end{equation}
decreases, and hence is bounded. This proves the $\gamma$-regularity of $\mu_P$. Therefore, by Proposition~\ref{mdp}, $\dim_H C_P \ge\gamma$ for every $\gamma<1$. The second 
conclusion of the lemma is also proved.
\end{proof}

Of course, taking any sequence $P\in\mathcal P$ one can easily construct a discontinuous map of a unit interval for which the set $C_P$ would be $(\phi,\a)$-nontypical for some $\phi\in C(I)$ and $\a>0$. Indeed, let $f$
send the set $C_P\setminus \{1\}$ to the left endpoint $0$ and $(I\setminus C_P)\cup \{1\}$ to the right endpoint $s_1=1$. Then the $\delta$-measure sitting at the right endpoint would be an SRB measure, and the set $C_P$ would play the role of the set of $(\phi,1)$-nontypical points $K_{\phi,1}$ (provided that the test function $\phi$ has a value $1$ on the left endpoint and $0$ on the right endpoint). But this map is discontinuous on the Cantor set! So, relying on this lemma and keeping in mind the mentioned lack of continuity, let us make the next step: construct an example of a map of the square $[0,1] \times [0,1]$ which is, in some sense, less discontinuous than the previous one, and still has a $(\phi,1)$-nontypical set of full Hausdorff dimension.

\section{Dimension 2: a sieving construction.}
Consider a square
$$
Y=\{(x,p): x,p\in [0,1]\}
$$
and describe a map $g$ on it.
Fix the ''horizontal level''
\begin{equation}\label{eq:Ip}
	I_p:=\{(x,p)\in Y \mid x\in [0,1]\}
\end{equation}
and split it into three parts:
$$I_p=I_{p,-1}\sqcup I_{p,0}\sqcup I_{p,1},$$
where

\begin{equation}\label{eq:Ipk}
\begin{aligned}
I_{p,-1}&:=[0;\frac{1-p}{2}),\\
I_{p,0}&:=[\frac{1-p}{2};\frac{1+p}{2}], \text{ and}\\
I_{p,1}&:=(\frac{1+p}{2};1]
\end{aligned}
\end{equation}
(i.e. the interval $I_{p,0}$ is the central part of 
length $p$).

To define the transformation $g$, we introduce the function $q\colon [0,1]\to [0,1]$,
\begin{equation}\label{eq:q-def}
	q(p)=p/(1+p)
\end{equation}
(so that $q(1/n)=1/(n+1)$), and mark the point $s_2=(1/2,1)$, which will be the support of SRB measure. We define $g$ separately on the different parts of every horisontal level $I_p$. For $p=1$, $g(x,p)=s_2$. For $0\le p<1$, we send the central part $I_{p,0}$ to the point $s_2$ and stretch linearly other parts $I_{p,-1}$ and $I_{p,1}$ to the whole level $I_{q(p)}$ each, see Figure~2. The shaded triangle in this figure goes to the fixed point $s_2$ under one iterate of the map $g$. This map is defined by the following formula:
\begin{equation}\label{eq:g-def}
g(x,p)= \begin{cases}
(\frac{2}{1-p}x, q(p)), &x\in I_{p,-1}\\
s_2, &x\in I_{p,0}\\
(\frac{2}{1-p}(x-\frac{1+p}{2}), q(p)), &x\in I_{p,1}.
\end{cases}
\end{equation}

\begin{center}
\scalebox{1.2} 
{
\begin{pspicture}(0,-4.5629687)(9.982813,4.5629687)
\definecolor{color108b}{rgb}{0.8,0.8,0.8}
\psframe[linewidth=0.04,dimen=outer](9.039531,3.8445315)(1.0795312,-4.115469)
\rput{-180.0}(10.12,-0.2700627){\pstriangle[linewidth=0.03,dimen=outer,fillstyle=solid,fillcolor=color108b](5.06,-4.1150312)(7.94,7.96)}
\psline[linewidth=0.04cm](1.0795312,0.4445314)(9.02,0.46296865)
\psline[linewidth=0.08cm](1.0795312,-1.4554687)(9.039531,-1.4554687)
\psline[linewidth=0.08cm](1.0995312,0.4445314)(2.7995312,0.4445314)
\psline[linewidth=0.08cm](7.3,0.44296864)(9.02,0.44296864)
\usefont{T1}{ptm}{m}{n}
\rput(0.8123438,0.4345314){$p_0$}
\usefont{T1}{ptm}{m}{n}
\rput(0.6223438,-1.4454687){$q(p_0)$}
\psdots[dotsize=0.14](5.079531,3.8445315)
\usefont{T1}{ptm}{m}{n}
\rput(5.072344,4.0745316){$s_2$}
\usefont{T1}{ptm}{m}{n}
\rput(1.9023439,0.7345314){$I_{p_0,0}$}
\usefont{T1}{ptm}{m}{n}
\rput(5.1023436,0.1945314){$I_{p_0,1}$}
\usefont{T1}{ptm}{m}{n}
\rput(8.382343,0.7345314){$I_{p_0,2}$}
\psline[linewidth=0.02cm,arrowsize=0.05291667cm 4.0,arrowlength=2.0,arrowinset=0.4]{->}(5.08,0.54296863)(5.079531,3.4645314)
\psline[linewidth=0.02cm,arrowsize=0.05291667cm 4.0,arrowlength=2.0,arrowinset=0.4]{->}(6.5,0.52296865)(5.259531,3.5045316)
\psline[linewidth=0.02cm,arrowsize=0.05291667cm 4.0,arrowlength=2.0,arrowinset=0.4]{->}(3.66,0.52296865)(4.959531,3.5045316)
\psline[linewidth=0.02cm,arrowsize=0.05291667cm 4.0,arrowlength=2.0,arrowinset=0.4]{->}(1.3195312,0.3445314)(1.48,-1.3570313)
\psline[linewidth=0.02cm,arrowsize=0.05291667cm 4.0,arrowlength=2.0,arrowinset=0.4]{->}(1.8795311,0.3645314)(4.84,-1.3970313)
\psline[linewidth=0.02cm,arrowsize=0.05291667cm 4.0,arrowlength=2.0,arrowinset=0.4]{->}(2.6395311,0.3645314)(8.12,-1.3770313)
\psline[linewidth=0.02cm,arrowsize=0.05291667cm 4.0,arrowlength=2.0,arrowinset=0.4]{->}(8.8,0.32296866)(8.58,-1.3570313)
\psline[linewidth=0.02cm,arrowsize=0.05291667cm 4.0,arrowlength=2.0,arrowinset=0.4]{->}(8.259532,0.3445314)(5.22,-1.3770313)
\psline[linewidth=0.02cm,arrowsize=0.05291667cm 4.0,arrowlength=2.0,arrowinset=0.4]{->}(7.5,0.34296864)(1.92,-1.3570313)
\psline[linewidth=0.04cm,arrowsize=0.05291667cm 4.0,arrowlength=2.0,arrowinset=0.4]{->}(1.1,3.5829687)(1.1,4.422969)
\psline[linewidth=0.04cm,arrowsize=0.05291667cm 4.0,arrowlength=2.0,arrowinset=0.4]{->}(8.58,-4.097031)(9.86,-4.097031)
\usefont{T1}{ptm}{m}{n}
\rput(0.93234366,4.3745313){$p$}
\usefont{T1}{ptm}{m}{n}
\rput(0.9323437,3.8345313){$1$}
\usefont{T1}{ptm}{m}{n}
\rput(9.012344,-4.365469){$1$}
\usefont{T1}{ptm}{m}{n}
\rput(5.012344,-4.3854685){$1/2$}
\usefont{T1}{ptm}{m}{n}
\rput(0.9723436,-4.285469){$0$}
\usefont{T1}{ptm}{m}{n}
\rput(9.6823435,-4.365469){$x$}
\end{pspicture} 
}
\\
Fig. 2: the map $g$: one step of the sieving construction
\end{center}

Let
$$
K_2=\{(x,p)\mid \forall n \quad g^n(x,p)\ne s_2\}.
$$
Note that under iterates of the function $q(p)$, see~\eqref{eq:q-def} any point $p$ from $[0,1]$ tends to zero. Then our construction immediately implies that there is an alternative description of the set $K_2$:
$$
K_2=\{(x,p)\mid \dist(g^n(x,p),I_0)\to 0\}.
$$
So we have the mechanism that acts like an infinite sieve: it lets the point pass down through it to the zero level (and not to be ''thrown out'' to the point $s_2$ under the iterates of $g$) if and only if the points of the future orbit of this point never appear in the shaded triangle that is the union of the central parts of level intervals.


\begin{lemma}\label{2}
The $\delta$-measure sitting at the point $s_2$ is a global SRB measure for the map $g$. The set $K_2$ of the points that tend to the level $\{p=0\}$ under the iterates of $g$ has zero Lebesgue $2$-measure and Hausdorff dimension $2$.
\end{lemma}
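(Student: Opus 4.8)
The statement packs together two soft facts — that $\delta_{s_2}$ is a global SRB measure and that $\Leb_2(K_2)=0$ — and one substantive fact, $\dim_H K_2=2$. I would start from the observation that $s_2=(1/2,1)$ is a fixed point of $g$ (by~\eqref{eq:g-def}, $g(x,1)=s_2$), so $Y\setminus K_2=\bigcup_{n\ge 0}g^{-n}(\{s_2\})$ is exactly the set of points that eventually fall into $s_2$ and stay there. Granting $\Leb_2(K_2)=0$, the SRB property is then immediate: for $\Leb_2$-a.e.\ $y$ there is $N$ with $g^n(y)=s_2$ for all $n\ge N$, so $\phi_n(y)\to\phi(s_2)=\int_Y\phi\,d\delta_{s_2}$ for every $\phi\in C(Y)$.

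For $\Leb_2(K_2)=0$ I would slice by the horizontal levels $I_p$ and use Fubini. Unwinding~\eqref{eq:g-def}--\eqref{eq:q-def}, a point $(x,p)$ survives in $K_2$ iff the forward orbit of $x$ under the induced one-dimensional dynamics never enters a central subinterval; keeping track of the linear rescalings, for each fixed $p\in(0,1)$ the fiber $K_2\cap I_p$ coincides, up to the countable set of endpoints of deleted intervals, with the Cantor set $C_{P(p)}$ built from the sequence $P(p)=(q^{\,n}(p))_{n\ge 0}$, where $q^{\,n}(p)=\tfrac{p}{1+np}$. Since $q^{\,n}(p)\to 0$ and $\sum_n q^{\,n}(p)=\infty$ (the terms are $\sim 1/n$), we have $P(p)\in\mathcal P$, so Lemma~\ref{1} gives $\Leb_1(C_{P(p)})=0$; the fiber over $p=1$ is empty, and the single level $\{p=0\}$ is $\Leb_2$-null. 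Hence $\Leb_2(K_2)=0$ by Fubini.

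The core is $\dim_H K_2\ge 2$, via Proposition~\ref{mdp}. Fix $0<a<b<1$ and $\gamma\in(0,1)$, and assemble the fiber measures into a probability measure on $Y$ by $d\mu_2(x,p)=\tfrac{1}{b-a}\,d\mu_{P(p)}(x)\,d\Leb(p)$ for $p\in[a,b]$, where $\mu_{P(p)}$ is the standard measure on $C_{P(p)}$ from Lemma~\ref{1}. Since $\mu_{P(p)}$ is non-atomic it charges $K_2\cap I_p$ with full mass, so $\mu_2(K_2)=1$. It then remains to check that $\mu_2$ is $(\gamma+1)$-regular. A telescoping computation based on $1-q^{\,k}(p)=\tfrac{1+(k-1)p}{1+kp}$ shows that a rank-$n$ interval of $C_{P(p)}$ has length
\begin{equation*}
\lambda_n(p)=\frac{1}{2^{n}}\cdot\frac{1-p}{1+(n-1)p},
\end{equation*}
with $\mu_{P(p)}$-measure $2^{-n}$; thus $\mu_{P(p)}(U_n)/|U_n|^{\gamma}=2^{-n(1-\gamma)}(1-p)^{-\gamma}(1+(n-1)p)^{\gamma}$, and the consecutive ratios $\lambda_n(p)/\lambda_{n+1}(p)$ are bounded uniformly for $p\in[a,b]$ and all $n$. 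Arguing exactly as in the proof of Lemma~\ref{1}, one obtains constants $C=C(\gamma,b)$ and $\delta=\delta(\gamma,b)>0$ such that $\mu_{P(p)}(U)\le C|U|^{\gamma}$ for every interval $U$ with $|U|<\delta$ and every $p\in[a,b]$. Finally, for $U\subset Y$ of sufficiently small diameter $\rho$, enclosing $U$ in a square of side $2\rho$ and integrating the fiberwise bound over the resulting $\le 2\rho$-long range of $p$-values gives $\mu_2(U)\le C'\rho^{\gamma+1}$. Since the proof of Proposition~\ref{mdp} only uses $\mu_2(K_2)>0$, it yields $\dim_H K_2\ge\gamma+1$; letting $\gamma\uparrow 1$ and combining with the trivial bound $\dim_H K_2\le 2$ finishes the proof.

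The one place that needs care, and which I expect to be the main obstacle, is the \emph{uniformity in $p$} of the regularity constant for $\mu_{P(p)}$: a straightforward fiberwise application of Lemma~\ref{1} produces a constant that blows up as $p\to 1$ (the Cantor set $C_{P(p)}$ degenerates there), which is precisely why one restricts to $p\in[a,b]$ with $b<1$ and why the explicit telescoped formula for $\lambda_n(p)$ is essential — it is what exhibits a single pair $C,\delta$ valid for all $p\in[a,b]$.
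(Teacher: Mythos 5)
Your proof follows the same overall strategy as the paper's: slice $K_2$ by the horizontal levels $I_p$, identify the fiber $K_2\cap I_p$ (up to a countable set) with the Cantor set $C_{P(p)}$ for $P(p)=\bigl(q^n(p)\bigr)_{n\ge 0}$, deduce $\Leb_2(K_2)=0$ from Lemma~\ref{1} and Fubini, and then integrate the fiberwise Cantor measures $\mu_{P(p)}$ against $dp$ and apply Proposition~\ref{mdp} to the resulting measure to get $\dim_H K_2\ge\gamma+1$ for every $\gamma<1$. The genuine added value in your write-up is the uniformity point you flag at the end. The paper integrates over the full range $p\in[0,1]$ and asserts, via a componentwise-majorization argument, that the $\gamma$-regularity constant of $\mu_{P(p)}$ "can be chosen independently on $p$"; but your explicit telescoped formula $\lambda_n(p)=2^{-n}\tfrac{1-p}{1+(n-1)p}$ makes it transparent that this constant behaves like $(1-p)^{-\gamma}$ and blows up as $p\to 1$ (the fiber Cantor set degenerates there), so the measure $\int_0^1\mu_p\,dm(p)$ is, near the top edge, only $1$-regular rather than $(\gamma+1)$-regular, and the paper's argument as written has a small gap. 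Your restriction to $p\in[a,b]$ with $b<1$ costs nothing — the restricted measure is still a probability measure supported in $K_2$, which is all Proposition~\ref{mdp} requires — and is exactly the repair that makes the regularity estimate hold with a single pair $(C,\delta)$. In short: same route, but your version correctly handles the uniformity in $p$ that the paper glosses over, and the explicit formula for $\lambda_n(p)$ (which the paper leaves implicit) is what makes that uniformity checkable.
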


\begin{proof}
First, let us prove that on every level $I_{p_0},\,0<p_0\le 1$, see~\eqref{eq:Ip}, Lebesgue almost every point is thrown out to $s_2$ under some iterate of $g$. That is, the set $D_{p_0}:=I_{p_0}\cap K_2$ has $1$-Lebesgue measure $0$. By construction, $D_{p_0}=\{p_0, x\mid x\in C_{P_0}\}$, where $P_0=P_0(p_0)=(p_0, q(p_0),\dots, q^n(p_0),\dots)$. By Lemma~\ref{1}, it is sufficient to show that $P_0\in\mathcal P$.

For $p_0=1$, $P_0=(1,1/2,1/3,1/4,\dots)\in \mathcal P$. For any $0<p_0<1$ we also have $P_0(p_0)\in\mathcal P$ due to monotonicity of the function $q$. Indeed, once $1/(n+1)<p_0\le 1/n$ for some $n\in\mathbb N$, then $1/(n+k+1)<q^k(p_0)\le 1/(n+k)$ for every $k\in\mathbb N$. Therefore, as mentioned before, the sequence $P_0$ tends to $0$, and the sum of its components is infinite, as well as for the harmonic one. Hence, $P_0$ belongs to $\mathcal P$.

The relation $\Leb_2(K_2)=0$ now follows from the Fubini theorem. Hence, $\delta$-measure at the point $s_2$ is a global SRB measure. It remains to prove that $\dim_H K_2=2$.

For every level $I_p$ denote by $\mu_p$ the measure constructed in the proof of Lemma~\ref{1} and supported on the Cantor set $D_{p_0}$ of the points that don't tend to $s_2$ under iterates of $g$. Fix any $\gamma<1$. As it was shown in the proof of Lemma~\ref{1}, all the measures $\mu_p$ for $0<p<1$ are $\gamma$-regular. Note also that the sequence of iterations $P_0(p_0)=(p_0, q(p_0),\dots, q^n(p_0),\dots)$ of the point $p_0=1$ componentwise majorises sequences of iterations for all other $p_0\in [0,1)$ due to monotonicity of the function $q$. Hence, by~\eqref{eq:lambdan}, for any $n$ the length $\lambda_n$ of intervals of the rank $n$ for $P_0=P_0(p_0)$ monotonously decreases while $p_0$ decreases, and by~\eqref{eq:regconst} the regularity constant $c=c(\gamma)$ can be chosen independently on $p$. Then the measure
$$
\mu:=\int_0^1 \mu_p dm(p)
$$
is $(\gamma+1)$-regular with the same constant $c(\gamma)$ due to Fubini theorem. 
Therefore, by Proposition~\ref{mdp}, $\dim_H S_2 \ge\gamma+1$ for every $\gamma<1$, and hence $\dim_H S_2=2$.
\end{proof}

The map $g$ is not continuous as well, but the set of discontinuity consists now only of two intervals that are the union of the vertices of the intervals $I_{p,0}$, see~\eqref{eq:Ipk} (and form two sides of the shaded triangle in Figure~2). Note also that even the image of the map $g$ is disconnected (it consists of the point $s_2$ and the lower half of the square $Y$). To get rid of these problems we include this construction into a flow. Namely, we consider such a flow that its Poincar\'e map for some cross-section resembles the map $g$ described above.



\section{Dimension 3: a flow on a stratified manifold}
In this section we present a $3$-manifold and a flow on it. The time $1$ map of this flow does not satisfy the SET. The shortcoming is that the phase space is a stratified, rather than a genuine smooth manifold. We start with their brief description. The idea is to avoid the discontinuity of the previous $2$-dimensional example by dividing the images of the parts of $I_p$ by separatrices of saddles of a smooth flow.

\subsection{Heuristic description}
We consider a closed simply connected subset $T$ of a square and multiply it directly by an interval $[0,1]$, thus obtaining a $3$-dimensional manifold with boundary, homeomorphic to a closed $3$-cube. Then we glue parts of the boundary of this manifold and obtain $2$-stratum $S_2$ (every point of this stratum has a neighborhood homeomorphic to three half balls glued together by the boundary discs). Then we define a flow such that the Poincar\'e map for the cross-section $S_2$ is very similar to the map $g$, see \eqref{eq:g-def}, while the time $1$ map of a flow is continuous and displays the same asymptotic behaviour as the Poincar\'e map. Thereinafter we take $p\in [0,1/2]$ instead of $p\in [0,1]$ to avoid incorrect constructions for $p=1$.

Now we pass to a rigorous description.

\subsection{Construction of a stratified manifold}
First we describe three curves in the square $R=\{(x,y)\mid x,y\in [-1,2]\}$. It isn't necessary to define them numerically, but just to state all their properties that we need. One can easily verify the existence of such curves.

The curve $\gamma_{-1}$ starts at the point $(0,-1)$ and ends at $(-1,0)$, and coincides with the straight intervals $\{(0,-1+t)\}$ and $\{(-1+t,0)\}$, $t\in[0,\eps]$, in some $\eps$-neighborhoods of its endpoints.
Analogously, the curve $\gamma_1$ starts at the point $(1,-1)$ ends at $(2,0)$, and coincides with the straight intervals $\{(1,-1+t)\}$ and $\{(2-t,0)\}$, $t\in[0,\eps]$, in some $\eps$-neighborhoods of its endpoints, see Figure~3a.

The curve $\gamma_0$ starts at the point $(-1,1)$ and ends at $(2,1)$. It is tangent to the line $\{(x,2)\}$ at the point $(1/2,2)$ and coincides with the straight intervals $\{(-1+t,1)\}$ and $\{(2-t,0)\}$, $t\in[0,\eps]$, in some $\eps$-neighborhoods of its endpoints.

Each of these three curves is assumed to be simple and $C^\infty$-smooth, to lie in the square $R$ and not to have any intersections with the other two curves.
For every $y_0\in (1,2)$ the curve $\gamma_0$ intersects the line $\{y=y_0\}$ at two points.

We denote by $T\subset R$ the closed subset of a square, bounded by these three curves and segments on the boundary of the square (see Figure~3a).

\begin{center}
\scalebox{1.2} 
{
 
}
\\
Fig. 3a: ''base'' subset $T$ of the square $R$
\end{center}

Let
\begin{equation}\label{eq:X-def}
X=T\times [0,1/2]\subset \{(x,y,p)\mid x,y\in [-1,2],p\in [0,1/2]\}.
\end{equation}

Denote left and right parts of the boundary of the level
\begin{equation}\label{eq:Tp}
T_p:=T\times \{p\}
\end{equation}
by
\begin{equation}\label{eq:Epm}
\begin{aligned}
E_{p,-1}&:=\{(-1,y,p)\mid y\in [0,1]\} \text{ and}\\
E_{p, 1}&:=\{( 2,y,p)\mid y\in [0,1]\}.
\end{aligned}
\end{equation}
We don't change the notations, thus denote the lower boundary $y=-1$ of $T_p$ by
\begin{equation}\label{eq:Ip3dim}
I_p=\{(x,-1,p)\mid x\in [0,1]\},
\end{equation}
and split it into three parts
\begin{equation}\label{eq:Ipk3dim}
\begin{aligned}
I_{p,-1}&:=\{(x,-1,p)\mid x\in [0;\frac{1-p}{2})\},\\
I_{p,0}&:=\{(x,-1,p)\mid x\in [\frac{1-p}{2};\frac{1+p}{2}]\}, \text{ and}\\
I_{p,1}&:=\{(x,-1,p)\mid x\in (\frac{1+p}{2};1]\}
\end{aligned}
\end{equation}
as in~\eqref{eq:Ip},~\eqref{eq:Ipk}
(the difference between notations is adding the coordinate $y$ with the condition $y=-1$).


We take $X$ and for every $p\in [0,1/2]$ glue linearly both intervals $E_{p,-1}$ and $E_{p,1}$ to \textit{the same} interval $I_{q(p)}$, where the function $q$ is defined in~\eqref{eq:q-def}. This equivalence is specified as follows:
\begin{equation}\label{eq:equiv3}
\begin{aligned}
(-1,y,p)\equiv(y,-1,q(p));\\
(1,y,p)\equiv(y,-1,q(p)).
\end{aligned}
\end{equation}

Thus we obtain a stratified $3$-manifold $\widetilde X$ with one $2$-dimensional stratum
\begin{equation}\label{eq:S_2}
	S_2:=\cup_{p=0}^{1/2} I_{q(p)}=\{(x,-1,p)\mid x\in[0,1],p\in[0,1/3]\},
\end{equation}
that coincides (in $\widetilde X$) with both rectangles
\begin{equation}\label{eq:Fpm}
F_{\pm}:=\cup_{p=0}^{1/2} E_{p,\pm 1},
\end{equation}
see~\eqref{eq:Epm}, according to the equivalence~\eqref{eq:equiv3}.

\subsection{Construction of a flow}
To describe the flow on $\widetilde X$, we first describe the flow on $X$.
This flow has two saddles $a_p$ and $b_p$ on any level $T_p$~\eqref{eq:Tp}. They lie on the intersection of $\gamma_{p,0}=\gamma_0 \times \{p\}$ on the level $T_p$ with the line $l_p=\{(x,y,p)\mid y=2-p/2\}$, and hence depend smoothly on $p$ and collide for $p=0$ in the point $(1/2,2,0)$.

Let
\begin{equation}\label{eq:Xpm}
\begin{aligned}
X^-:=\{(x,y,p)\in X\mid y< 2-p/2\},\\
X^+:=\{(x,y,p)\in X\mid y\ge 2-p/2\}.
\end{aligned}
\end{equation}
The set $X^-$ can be described as ''part of $X$ that lies below the plane $L$ that contains all the $a_p$'s and $b_p$'s''.

The function $p$ is the first integral of the flow restricted to $X^-$.
The flow has no singular points on $X^-$. The left and right endpoints of the interval $I_{p,0}$ (see~\eqref{eq:Ip3dim},~\eqref{eq:Ipk3dim}) lie on the incoming separatrices of $a_p$ and $b_p$ respectively (for $p=0$ these endpoints collide as well as $a_0$ and $b_0$ do). The outcoming separatrices of $a_p$ and $b_p$ for the flow restricted to $X^-_p=X_p\cap X^-$ are respective parts of $\gamma_{p,0}$. All other trajectories in $X^-_p$ go from $I_{p,\pm 1}$ to $E_{p,\pm 1}$ and from $I_{p,0}$ to interval $E_{p,0}$ on the line $l_p$ between $a_p$ and $b_p$ (see Figures~3b,~3c).

\begin{center}
\scalebox{1.2} 
{
 
}
\\
Fig. 3c: the flow on the level $T_p$ for $p=0$
\end{center}
We also assume that there are some neighborhoods $V_\pm$ of $F_\pm$, see~\eqref{eq:Fpm}, such that the generator
of the flow is equal to $(0,\pm 1,0)$ in $V_\pm$ respectively; and a neighborhood $U$ of $S_2$ in $X$, see~\eqref{eq:S_2}, such that the the generator is equal to $(1,0,0)$ in $U$.

We have already claimed that all the trajectories in $X^-$ preserve $p$-coordinate. The latter assumptions mean that all the trajectories that escape the entrance cross-section $I_p$ are ''locally vertical'' (preserving $x$-coordinate), and all the trajectories that come to the exit cross-sections $E_{p,\pm 1}$ are ''locally horisontal'' (preserving $y$-coordinate), and, moreover, the velocity of the flow near $F_\pm$ and $S_2$ is locally constant and equal to $1$. These assumptions will allow us descend the flow easily to the glued manifold correctly ($C^\infty$-smoothly) in the $4$-dimensional case, where we deal with a genuine (non-stratified) manifold. The description of the flow restricted to $X^-$ is finished.


All trajectories in $X^+$, except for the fixed points $a_p$ and $b_p$, $p\in [0,1/2]$, and the unique sink $s_3:=(1/2,2,1/2)$, start from the plane $L$ that divides $X^+$ and $X^-$, smoothly continue the corresponding trajectories from $X^-$ that come to this plane, and tend to the sink $s_3$.

The description of the flow on $X$ is finished. We descend it on $\widetilde X$ (without any worries about its smoothness, because there is no smooth structure on the stratum $S_2$, anyway).

Let $G\colon \widetilde X\to\widetilde X$ denote its time $1$ map.

\begin{lemma}\label{3}
The $\delta$-measure sitting at the point $s_3$ is a global SRB measure for the map $G$. The set $K_3$ of the points that don't tend to $s_3$ under iterates of $G$ has zero Lebesgue $3$-measure and Hausdorff dimension $3$.
\end{lemma}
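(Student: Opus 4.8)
The plan is to follow the scheme of the proof of Lemma~\ref{2}: transport the sieving dynamics of $g$ to the cross-sections of the flow, then saturate along trajectories, and read off both the measure-theoretic and the Hausdorff-dimension statements from there. First I would identify the orbits that do not tend to $s_3$. By construction a trajectory in $X^-$ leaves $X^-$ through the exit segment $E_{p,0}\subset l_p$ — and afterwards runs to $s_3$, unless it is one of the saddles $a_p,b_p$ or lies on a stable separatrix of one of them — precisely when its last crossing of an entrance cross-section $I_r$ fell into the central part $I_{r,0}$. Since $E_{p,\pm1}$ is glued to $I_{q(p)}$ and the flow near $S_2$ and $F_\pm$ has unit speed along a coordinate axis, the first-return map of the flow to $S_2=\bigcup_{r\in[0,1/3]}I_r$ is, in the natural coordinates, conjugate to (a restriction of) the map $g$ of~\eqref{eq:g-def}. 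Hence $K_3\cap S_2$ is, up to a set of dimension $\le 1$, the exact analogue of the set $K_2$ of Lemma~\ref{2}: the union $\bigcup_r\{r\}\times C_{P_0(r)}$ with $P_0(r)=(r,q(r),q^2(r),\dots)\in\mathcal P$. Writing $K_3'$ for the flow-saturation in $X^-$ of the Cantor sets $C_{P_0(r)}\subset I_r$ (the orbits that stay in $X^-$ and avoid the saddles), the remainder $K_3\setminus K_3'$ — the arcs $\{a_p\}$, $\{b_p\}$, their stable separatrices, and the trajectory-orbits of the latter — is a countable union of smooth pieces of dimension $\le 2$; so everything reduces to analysing $K_3'$.

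For the Lebesgue statement and the SRB property: fix $p\in[0,1/2]$; as $p$ is a first integral of the flow on $X^-$, the slice $K_3'\cap X^-_p$ is the flow-saturation inside the surface $X^-_p$ of $C_{P_0(p)}\subset I_p$, which has zero length by Lemma~\ref{1}. The cross-section holonomies being piecewise smooth, this saturation is $\Leb_2$-null in $X^-_p$, and Fubini over $p$ gives $\Leb_3(K_3')=0$; combined with $\dim_H(K_3\setminus K_3')\le 2$ this gives $\Leb_3(K_3)=0$. Consequently Lebesgue-almost every point tends to $s_3$, so for every $\varphi\in C(\widetilde X)$ the time averages $\varphi_n$ converge almost everywhere to $\varphi(s_3)=\int\varphi\,d\delta_{s_3}$; since $s_3$ is a fixed point, $\delta_{s_3}$ is a global SRB measure for $G$.

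For $\dim_H K_3=3$ I would reproduce the layered construction of Lemma~\ref{2}, adding one layer for the flow direction, and — to sidestep the fact that orbits in $K_3'$ have infinite length — use only a single ``bounce''. Fix $\gamma<3$ and set $\gamma'=\gamma-2<1$. For $r\in[0,1/2]$ let $\mu_r$ be the measure on $C_{P_0(r)}$ from Lemma~\ref{1}; by the monotonicity of the lengths $\lambda_n$ in the parameter (here $r=1/2$ is the extremal case) the $\mu_r$ are simultaneously $\gamma'$-regular with one constant. Restrict $\mu_r$ to $I_{r,-1}$ and spread it by arc length along the finite trajectory-arcs from $I_{r,-1}$ to $E_{r,-1}$; near $S_2$ and $F_\pm$ these arcs are straight and unit-speed, so off a neighbourhood of the saddles the flow-boxes are bi-Lipschitz with uniform constants, and one obtains a measure on a $2$-dimensional subset of $K_3'\cap X^-_r$ that is $(\gamma'+1)$-regular. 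Then $\mu:=\int_0^{1/2}(\text{this measure})\,dr$ is $(\gamma'+2)$-regular by Fubini, is a probability measure supported in $K_3$, and Proposition~\ref{mdp} gives $\dim_H K_3\ge\gamma'+2=\gamma$. Letting $\gamma\uparrow 3$ and recalling $K_3\subset\widetilde X$ yields $\dim_H K_3=3$.

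The main obstacle is the uniform $\gamma$-regularity of the spread measure near the saddles $a_r,b_r$, and above all near $r=0$, where $a_r$ and $b_r$ collide and $I_{r,0}$ degenerates to a point while $C_{P_0(r)}$ still accumulates on the (merging) stable separatrices; there the cross-section holonomy is far from Lipschitz. The saving facts are that the arc length of a trajectory making a hyperbolic passage near a saddle stays uniformly bounded, so in a small ball $U$ the flow-direction contribution to $\mu$ is still $O(|U|)$, while the transverse contribution is governed by the $\gamma'$-regularity of $\mu_r$ on the thin strips adjacent to the separatrices; one also needs sufficient smoothness of the construction in $r$ for the final Fubini step near $r=0$. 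Carrying out these estimates is the technical heart of the lemma.
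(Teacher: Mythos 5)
Your approach is, in outline, the same as the paper's: identify $K_3$ as the flow-saturation of the two-dimensional set $K_2'=K_2\cap\{p\le 1/2\}$ (modulo a remainder of dimension at most $2$), apply Fubini for the Lebesgue statement, and obtain the Hausdorff-dimension bound by pushing a $\gamma'$-regular measure along trajectory arcs and invoking Proposition~\ref{mdp}. The paper's own proof is four sentences stating exactly this and does not spell out the measure construction; your ``single bounce'' trick (spreading $\mu_r$ by arc length only along the finite arcs from $I_{r,-1}$ to $E_{r,-1}$, rather than along the entire infinite orbit) is a clean and correct realization of that final step, and your remark that $K_3\setminus K_3'$ sits in a set of dimension at most $2$ (the arcs of saddles and their stable-separatrix surfaces) supplies a detail the paper leaves implicit.

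Where I would push back is your last paragraph: the difficulties you flag — uniform regularity of the spread measure near the saddles $a_r,b_r$ and near $r=0$ where they collide — are not ``the technical heart of the lemma'', because Proposition~\ref{mdp} only requires \emph{some} probability measure supported \emph{inside} $K_3$ with the desired regularity, and Hausdorff dimension is monotone under inclusion. It therefore suffices to restrict $r$ to a compact subinterval $[r_0,1/2]$ with $r_0>0$ and to restrict $\mu_r$ to $C_{P_0(r)}\cap[0,\tfrac{1-r}{2}-\delta]$ for a fixed small $\delta>0$: a restriction of a $\gamma'$-regular measure is still $\gamma'$-regular with the same constant, the restricted mass is uniformly bounded below over $r\in[r_0,1/2]$, and on this compact domain the trajectory arcs from $I_r$ to $E_{r,-1}$ stay uniformly away from the saddles, so the flow boxes are uniformly bi-Lipschitz. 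Spreading along those arcs and integrating over $r\in[r_0,1/2]$ then gives a $(\gamma'+2)$-regular probability measure supported in $K_3$ with no hyperbolic-passage estimates at all. Since $\gamma'<1$ is arbitrary and $K_3\subset\widetilde X$, this already yields $\dim_H K_3=3$. Your outline is correct and in fact more explicit than the paper's; the closing worry is unnecessary.
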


\begin{proof}
The set of the points $(x,p)$ of the ''front side'' $\{y=0\}$ that don't tend to the point $s_3$ is the intersection $K_2'$ of the set $K_2$ from the previous section with the semispace $p\le 1/2$. By Lemma~\ref{2}, the set $K_2'$ has Hausdorff dimension $2$ and zero Lebesgue $2$-measure. The set $K_3$ is a saturation of $K_2'$ by the trajectories of the flow. Therefore $K_3$ has zero Lebesgue $3$-measure by Fubini theorem and Hausdorff dimension $3$ by Proposition~\ref{mdp}. The rest of the points, obviously, tends to $s_3$.
\end{proof}

The map $G$ is continuous, but as we discussed before, after the gluing~\eqref{eq:equiv3} described above we obtain a stratified manifold. To get rid of the $2$-stratum $S_2$ we need to add one more dimension.

\section{Dimension 4: gluing up a genuine manifold}
We start with an unformal description of a flow on the $4$-manifold with a piecewise-smooth boundary, and then present the corresponding formulas.

\subsection{Heuristic description again}
Recall that $X=T\times [0,1]$ is the subset of the $3$-parallelepiped, before gluing the boundaries. Multiply it by an interval $[0,1]$, introducing new coordinate $h$.
Denote
\begin{equation}\label{eq:Mpm}
\begin{aligned}
M^-=X^-\times [0,1],\\
M^+=X^+\times [0,1].
\end{aligned}
\end{equation}
We define a flow in $M^-$ as follows: its the trajectories start on each \textit{entrance square cross-section on the level $p$} (that is, $I_p$, multiplied by the $h$-interval $[0,1]$) and preserve the $h$-coordinate until they reach the boundary. The flow is actually the same as in $X^-$ in previous section, with the condition on the generator $\dot h=0$ added.

Almost all trajectories in $M^+$ tend to a unique sink $s_4$ (as in previous section: for $M^+$ instead of $X^+$, and $s_4$ instead of $s_3$).

Then we define the gluing of the boundaries of $X\times [0,1]$ using the new coordinate $h$: the \textit{left and right hand exit square cross-sections} (that are $E_\pm$, multiplied by the $h$-interval $[0,1]$) are contracted thrice in the $h$-direction and are divorced by gluing to the \textit{separated} parts of the entrance square cross-section on the level $q(p)$.

Thus we obtain the flow that acts on a compact manifold with a piecewise smooth boundary. The time $1$ map of this flow is continuous, admits a global SRB measure sitting at the point $s_4$, and the SET doesn't hold for it. Then, by means of several simple tricks, we derive from this flow another one, which acts on the boundaryless manifold, and such that its time $1$ map is bijective.

\subsection{Construction of a manifold with a piecewise smooth boundary}
Denote the ''front'', or ''entrance'' cross-section by
$$
M_{-1}:=\{(x,-1,p,h)\mid x,h\in [0,1], p\in [0,1/2]\}.
$$
The entrance square cross-section on the level $p$
\begin{equation*}
	X_p=\{(x,-1,p,h)\mid x,h\in [0,1]\}
\end{equation*}
is divided into three parts in $x$-direction according to the splitting~\eqref{eq:Ipk}:
\begin{equation*}
\begin{aligned}
&X_{p,-1}=\{(x,0-1,p,h)\mid x\in I_{p,-1},h\in [0,1]\};\\
&X_{p,0}=\{(x,0-1,p,h)\mid x\in I_{p,0},h\in [0,1]\};\\
&X_{p,1}=\{(x,-1,p,h)\mid x\in I_{p,1},h\in [0,1]\}.
\end{aligned}
\end{equation*}
Let us also divide $X_p$ into three equal parts in $h$-direction:
\begin{equation*}
\begin{aligned}
&Z_{p,-1}=\{(x,-1,p,h)\mid x\in [0,1], h\in [0,1/3)\};\\
&Z_{p,0}=\{(x,-1,p,h)\mid x\in [0,1], h\in [1/3,2/3]\};\\
&Z_{p,1}=\{(x,0-1,p,h)\mid x\in [0,1], h\in (2/3,1]\}.
\end{aligned}
\end{equation*}

We also define the ''exit squares'' on the level $p$:
\begin{equation*}
\begin{aligned}
&A_{p,0}=\{(x,y,p,h)\mid (x,y,p)\in E_{p,0}, h\in [0,1]\},\\
&A_{p,\pm 1}=\{(x,y,p,h)\mid (x,y,p)\in E_{p,\pm 1}, h\in [0,1]\},
\end{aligned}
\end{equation*}
and glue the squares $A_{p,\pm 1}$ linearly to the rectangles $Z_{q(p),\pm 1}$ by the following equivalence:
\begin{equation}\label{eq:equiv4}
\begin{aligned}
&(-1,y,p,h)\equiv(y,-1,q(p),h/3);\\
&(2,y,p,h)\equiv(y,-1,q(p),1-(h/3)).
\end{aligned}
\end{equation}

Thus we obtain the genuine $C^\infty$-smooth manifold $M$ with a piecewise smooth. Indeed, all we need is to verify the existence of local charts on the manifold $N\subset M_{-1}$,
\begin{equation}\label{eq:N}
	N:=\{(x,-1,p,h)\mid x\in [0,1], p\in[0,1/3], h\in[0,1/3]\cup[2/3,1]\}.
\end{equation}
Note that one can continue the manifold $X\times [0,1]$ in $x$-direction to the neighborhoods of $A_{p,\pm 1}$ and in $y$-direction to the neighborhood of $Z_{q(p),\pm 1}$ (in $\mathbb R^4$), and extend the equivalence~\eqref{eq:equiv4} by the following formulas (for $\eps$ sufficiently close to $0$):
\begin{equation}\label{eq:equiv4ext}
\begin{aligned}
&(-1+\eps,y,p,h)\equiv(y,-1-\eps,q(p),h/3);\\
&(2-\eps,y,p,h)\equiv(y,-1-\eps,q(p),1-(h/3)).
\end{aligned}
\end{equation}
Then the local charts on $N$ descend from $\mathbb R^4$ to $M$ according to the latter equivalence.

\subsection{Construction of a flow}
In fact, we should add only few words to the heuristic description of the flow to make it rigorous.

As was mentioned before, the flow in $M^-$ is actually the same flow as described the $3$-dimensional case, multiplied directly by coordinate $h$ (preserving it). We assumed before, that for the flow decsribed the $3$-dimensional case, there are some neighborhoods of $F_\pm$ and $S_2$, such that the generator of the flow is equal to $(0,\pm 1,0)$ in $V_\pm$ respectively, and is equal to $(1,0,0)$ in $U$. Thus, by the equivalence~\eqref{eq:equiv4ext}, there is a neighborhood of $N$ in $M$ such that the generator of the flow is identically equal to $(1,0,0,0)$ in this neighborhood. It means that the flow is descended to $M$ $C^\infty$-smoothly.


Figure~4 displays the first return map to the ''front'' cubic cross-section $M_{-1}$
for points in $X_{p,-1}$ and $X_{p,1}$ that return to this cross-section (equivalently: for points whose $x$-coordinate is not central in terms of the map $g$, see Section~2.2). This map resembles a ''non-autonomous horseshoe map'': the $p$-levels are non-invariant and the expanding rate depends on the level.
\begin{center}
\scalebox{1.2} 
{
\begin{pspicture}(0,-4.5289063)(12.722813,4.5089064)
\definecolor{color509b}{rgb}{0.6,0.6,0.6}
\definecolor{color97b}{rgb}{0.8,0.8,0.8}
\psline[linewidth=0.08cm](1.1395311,3.4689062)(4.539531,4.4689064)
\psline[linewidth=0.08cm](8.559532,3.448906)(11.959531,4.4489064)
\psline[linewidth=0.08cm](8.559532,-3.971094)(12.039531,-3.011094)
\psline[linewidth=0.04cm,linestyle=dashed,dash=0.16cm 0.16cm,arrowsize=0.05291667cm 4.0,arrowlength=2.0,arrowinset=0.4]{->}(1.1595312,-3.971094)(5.2795315,-2.7710938)
\psline[linewidth=0.08cm](11.979531,4.4689064)(11.999531,-3.0310938)
\psline[linewidth=0.08cm](4.5195312,4.4689064)(12.019531,4.4689064)
\psline[linewidth=0.03cm,linestyle=dashed,dash=0.16cm 0.16cm](4.5195312,0.48890612)(12.019531,0.48890612)
\psline[linewidth=0.06cm](1.099531,-2.0510938)(8.599532,-2.0510938)
\psline[linewidth=0.03cm,linestyle=dashed,dash=0.16cm 0.16cm](1.1795312,-2.0310938)(4.579531,-1.031094)
\pspolygon[linewidth=0.03,fillstyle=solid,fillcolor=color509b](9.89953,0.48890612)(6.5195312,-0.53109396)(8.579531,-0.55109394)(11.93953,0.48890612)
\psline[linewidth=0.06cm](1.1395311,-0.53109396)(8.639532,-0.53109396)
\psline[linewidth=0.06cm](8.619531,-0.53109396)(11.999531,0.48890612)
\psdots[dotsize=0.16](9.879532,0.48890612)
\psdots[dotsize=0.16](11.979531,0.48890612)
\psdots[dotsize=0.16](6.539531,-0.5110939)
\psdots[dotsize=0.16](6.4195313,0.5089061)
\psdots[dotsize=0.16](1.1595312,-0.53109396)
\psdots[dotsize=0.16](8.579531,-0.55109394)
\pspolygon[linewidth=0.03,fillstyle=solid,fillcolor=color97b](10.879531,-1.371094)(3.439531,-1.371094)(4.539531,-1.031094)(11.959531,-1.031094)
\pspolygon[linewidth=0.03,fillstyle=solid,fillcolor=color509b](4.559531,0.48890612)(1.2195312,-0.5110939)(3.1195314,-0.5110939)(6.4395313,0.48890612)
\pspolygon[linewidth=0.03,fillstyle=solid,fillcolor=color97b](8.599532,-2.0310938)(1.1595312,-2.0310938)(2.279531,-1.7110939)(9.679532,-1.6910938)
\psline[linewidth=0.06cm](8.559532,-2.0310938)(11.959531,-1.031094)
\psframe[linewidth=0.08,dimen=outer](8.619531,3.488906)(1.1195312,-4.0110936)
\psdots[dotsize=0.16](3.0995314,-0.5110939)
\psdots[dotsize=0.16](4.5195312,0.5089061)
\psline[linewidth=0.06cm](4.7795315,-3.971094)(1.1795312,3.448906)
\psline[linewidth=0.06cm](4.7795315,-3.951094)(8.599532,3.488906)
\psline[linewidth=0.03cm,linestyle=dashed,dash=0.16cm 0.16cm](8.079531,-2.971094)(11.979531,4.4689064)
\psline[linewidth=0.03cm,linestyle=dashed,dash=0.16cm 0.16cm](8.079531,-2.9910936)(4.539531,4.4889064)
\psframe[linewidth=0.04,linestyle=dashed,dash=0.16cm 0.16cm,dimen=outer](12.019531,4.4889064)(4.5195312,-3.011094)
\psdots[dotsize=0.16](1.1395311,-2.0510938)
\psdots[dotsize=0.16](2.299531,-1.7110939)
\psdots[dotsize=0.16](3.439531,-1.3910939)
\psdots[dotsize=0.16](4.539531,-1.011094)
\psdots[dotsize=0.16](11.979531,-1.031094)
\psdots[dotsize=0.16](10.799531,-1.391094)
\psdots[dotsize=0.16](9.659532,-1.7110939)
\psdots[dotsize=0.16](8.579531,-2.0510938)
\psline[linewidth=0.04cm,linestyle=dotted,dotsep=0.16cm](3.0995314,-0.53109396)(3.1195314,-3.931094)
\psline[linewidth=0.04cm,linestyle=dotted,dotsep=0.16cm](6.539531,-0.55109394)(6.539531,-3.951094)
\psdots[dotsize=0.16](6.539531,-3.971094)
\psdots[dotsize=0.16](3.1195314,-3.971094)
\psdots[dotsize=0.16](4.7795315,-3.971094)
\usefont{T1}{ptm}{m}{n}
\rput(4.882344,-4.3010936){$I_{p,0}$}
\usefont{T1}{ptm}{m}{n}
\rput(7.6023436,-4.2610936){$I_{p,1}$}
\usefont{T1}{ptm}{m}{n}
\rput(2.2223437,-4.3010936){$I_{p,-1}$}
\usefont{T1}{ptm}{m}{n}
\rput(5.2923436,-3.721094){$1/2$}
\psline[linewidth=0.03cm,arrowsize=0.05291667cm 4.0,arrowlength=2.0,arrowinset=0.4]{->}(1.1595312,3.2689064)(1.1595312,4.328906)
\psline[linewidth=0.03cm,arrowsize=0.05291667cm 4.0,arrowlength=2.0,arrowinset=0.4]{->}(8.179532,-3.9910936)(9.879532,-3.971094)
\usefont{T1}{ptm}{m}{n}
\rput(9.602344,-4.1810937){$x$}
\usefont{T1}{ptm}{m}{n}
\rput(0.9723436,4.2589064){$p$}
\usefont{T1}{ptm}{m}{n}
\rput(5.0723433,-2.6210938){$h$}
\usefont{T1}{ptm}{m}{n}
\rput(2.0123436,-3.441094){$1/3$}
\psline[linewidth=0.034cm,linestyle=dotted,dotsep=0.16cm](2.299531,-1.6910938)(2.3,-3.5910938)
\psline[linewidth=0.034cm,linestyle=dotted,dotsep=0.16cm](3.44,-1.5110937)(3.439531,-3.291094)
\psdots[dotsize=0.16](2.279531,-3.6310937)
\psdots[dotsize=0.16](3.439531,-3.311094)
\usefont{T1}{ptm}{m}{n}
\rput(3.1123435,-3.181094){$2/3$}
\usefont{T1}{ptm}{m}{n}
\rput(4.3723435,-2.801094){$1$}
\usefont{T1}{ptm}{m}{n}
\rput(0.8923437,-3.9810936){$0$}
\usefont{T1}{ptm}{m}{n}
\rput(8.612343,-4.2610936){$1$}
\usefont{T1}{ptm}{m}{n}
\rput(0.9323437,3.558906){$1$}
\usefont{T1}{ptm}{m}{n}
\rput(0.85234374,-0.5010939){$p_0$}
\usefont{T1}{ptm}{m}{n}
\rput(0.6223438,-2.0610938){$q(p_0)$}
\psdots[dotsize=0.16](1.1595312,3.4689062)
\psdots[dotsize=0.16](4.559531,-2.971094)
\psdots[dotsize=0.16](8.579531,-3.971094)
\psbezier[linewidth=0.1,arrowsize=0.05291667cm 5.0,arrowlength=1.0,arrowinset=0.4]{->}(3.7395313,-0.0310939)(3.82,-0.89109373)(5.179531,-1.6710938)(5.48,-1.8910937)
\psbezier[linewidth=0.1,arrowsize=0.05291667cm 5.0,arrowlength=1.0,arrowinset=0.4]{->}(9.43953,0.0289061)(8.9,-0.87109375)(8.51953,-0.85109395)(7.74,-1.2710937)
\psbezier[linewidth=0.02,arrowsize=0.05291667cm 3.0,arrowlength=3.0,arrowinset=0.4]{->}(2.8395312,2.208906)(2.8395312,1.4089061)(3.5995314,0.4289061)(4.3595314,0.0289061)
\psbezier[linewidth=0.02,arrowsize=0.05291667cm 3.0,arrowlength=3.0,arrowinset=0.4]{->}(9.839532,2.188906)(10.259532,1.6889061)(10.519531,0.84890616)(10.219532,0.0489061)
\psbezier[linewidth=0.02,arrowsize=0.05291667cm 3.0,arrowlength=3.0,arrowinset=0.4]{->}(11.759532,-3.451094)(11.72,-2.3710938)(12.0,-1.1910938)(10.12,-1.1910938)
\psbezier[linewidth=0.02,arrowsize=0.05291667cm 3.0,arrowlength=3.0,arrowinset=0.4]{->}(10.139532,-3.711094)(9.86,-2.8710938)(9.74,-1.8310938)(8.1,-1.8510938)
\usefont{T1}{ptm}{m}{n}
\rput(10.702343,-3.8810937){$Z_{q(p_0),0}$}
\usefont{T1}{ptm}{m}{n}
\rput(11.502344,-3.601094){$Z_{q(p_0),2}$}
\usefont{T1}{ptm}{m}{n}
\rput(2.8723438,2.4789062){$X_{p_0,0}$}
\usefont{T1}{ptm}{m}{n}
\rput(9.812344,2.418906){$X_{p_0,2}$}
\psdots[dotsize=0.16](1.1595312,-3.971094)
\end{pspicture} 
}
\\
Fig. 4: first return map for the front cubic transversal $M_{-1}$
\end{center}

Analogously to the $3$-dimensional case, we put in $M^+$, see~\eqref{eq:Mpm}, the unique sink $s_4=(1/2,2,1/2,1/2)$. This sink attracts all the trajectories of the flow as soon as they reach $M^+$, except of two surfaces of the fixed points:
$$
\{(x,y,p,h)\mid (x,y)=a_p, p\in [0,1/2], h\in [0,1]\}
$$
and
$$
\{(x,y,p,h)\mid (x,y)=b_p, p\in [0,1/2], h\in [0,1]\}.
$$
 
Let $H\colon M\to M$ denote time $1$ map of the flow described above (one should distinguish it from the first return map displayed on the Figure~4). 

\begin{lemma}\label{4}
The $\delta$-measure sitting at the point $s_4$ is a global SRB measure for the map $H$. The set $K_4$ of the points that don't tend to $s_4$ under iterations of $H$ has zero Lebesgue $4$-measure and Hausdorff dimension $4$.
\end{lemma}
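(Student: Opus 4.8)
The plan is to repeat, one dimension higher, the argument used for Lemma~\ref{3}: reduce everything to the first-return map to the front cubic cross-section $M_{-1}$, and then saturate by the flow.

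First I would describe the non-escaping set on $M_{-1}$. Use coordinates $(x,p,h)\in[0,1]\times[0,1/2]\times[0,1]$ on $M_{-1}$ (writing $y=-1$). By the construction of the flow on $M^-$ and the gluing~\eqref{eq:equiv4}, a point of $M_{-1}$ whose $x$-coordinate at some iterate of the first-return map falls into a central part $X_{q^k(p),0}$ is sent into $M^+$ and hence tends to $s_4$, while every other point of $M_{-1}$ returns to $M_{-1}$ infinitely often and does not tend to $s_4$. The $h$-coordinate is irrelevant for escaping: the first-return map (Figure~4) merely contracts $h$ thrice into $[0,1/3)\cup(2/3,1]$ at each step, and never by itself drives a point into $M^+$. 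Consequently the set of non-escaping points of $M_{-1}$ is
$$
L=\{(x,-1,p,h)\mid (x,p)\in K_2',\ h\in[0,1]\}=K_2'\times[0,1]_h,
$$
where $K_2'=K_2\cap\{p\le 1/2\}$ is the set from the proof of Lemma~\ref{3}; recall that on each level $p$ its $x$-section is the Cantor set $C_{P_0(p)}$ with $P_0(p)=(p,q(p),q^2(p),\dots)\in\mathcal P$.

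Next I would transfer the estimates of Lemma~\ref{2} to $L$. Since $\Leb_2(K_2')=0$, Fubini gives that $L$ has zero $3$-dimensional Lebesgue measure inside $M_{-1}$. Fixing $\gamma<1$ and taking $\mu_{M_{-1}}:=\bigl(\int_0^{1/2}\mu_p\,dm(p)\bigr)\times\Leb_h$, where $\mu_p$ is the $\gamma$-regular measure on $C_{P_0(p)}$ built in the proof of Lemma~\ref{1}, the same Fubini-type computation as in the proof of Lemma~\ref{2} (the regularity constant being uniform in $p\in[0,1/2]$, exactly as there) shows that $\mu_{M_{-1}}$ is $(\gamma+2)$-regular and supported on $L$. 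Then I would saturate by the flow: up to the two $2$-dimensional surfaces $\{(x,y)=a_p\}$ and $\{(x,y)=b_p\}$ of fixed points, which carry zero $4$-Lebesgue measure and do not affect Hausdorff dimension, $K_4$ is the union of the flow trajectories through $L$, and all remaining points of $M$ tend to $s_4$. Hence $\Leb_4(K_4)=0$ by Fubini, so (since $s_4$ is a fixed point and the convergence of an orbit to it forces convergence of Birkhoff averages) the $\delta$-measure at $s_4$ is a global SRB measure for $H$. Pushing $\mu_{M_{-1}}$ along the flow — a local diffeomorphism of bounded distortion, transverse to $M_{-1}$ — produces a probability measure on $K_4$ that is $(\gamma+3)$-regular; by Proposition~\ref{mdp}, $\dim_H K_4\ge\gamma+3$ for every $\gamma<1$, and therefore $\dim_H K_4=4$.

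The only step that is not pure bookkeeping is the first one: one should verify that the $4$-dimensional flow on the glued manifold $M$ really reproduces the ``infinite sieve'' dynamics, i.e. that forgetting $h$ descends to a semiconjugacy $M\to\widetilde X$ (compatible with both gluings~\eqref{eq:equiv3} and~\eqref{eq:equiv4}) onto the flow of Lemma~\ref{3}, so that the $h$-contraction — introduced only to separate the two exit squares and thereby remove the stratum $S_2$ — cannot interfere with escape to $s_4$. Granting this, the measure and dimension parts are a routine repetition of the Fubini and Mass Distribution arguments already used in Lemmas~\ref{1}--\ref{3}.
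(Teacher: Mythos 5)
Your proof is correct, but it takes a more elaborate route than the paper's. The paper's proof is a one-liner: $K_4$ coincides, modulo a set of Hausdorff dimension~$3$, with the cross product $K_3\times[0,1]_h$, and then all the statements (zero Lebesgue $4$-measure, Hausdorff dimension~$4$) are deduced at once from Lemma~\ref{3}. You instead descend all the way to the cross-section $M_{-1}$, rebuild the Lemma~\ref{2}/\ref{3} measure-theoretic machinery one dimension higher (the product of $\int\mu_p\,dm(p)$ with $\Leb_h$, giving $(\gamma+2)$-regularity on $M_{-1}$), and then saturate by the flow to obtain a $(\gamma+3)$-regular measure on $K_4$. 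The two routes are logically equivalent: since the $4$-dimensional flow preserves $h$, the flow-saturation of $K_2'\times[0,1]_h$ is exactly $K_3\times[0,1]_h$. Your version buys self-containedness --- it uses only the Mass Distribution Principle and Fubini, instead of implicitly invoking the Hausdorff-dimension product inequality $\dim_H(A\times[0,1])=\dim_H A+1$ --- at the cost of repeating arguments the paper already carried out for Lemmas~\ref{2} and~\ref{3}. One small imprecision: the exceptional set you set aside is not only the two $2$-dimensional saddle surfaces $\{(x,y)=a_p\}$, $\{(x,y)=b_p\}$ but also their stable sets (trajectories through the endpoints of $I_{p,0}$, of dimension at most $3$); this is harmless for both the measure and the dimension statements, and the paper is equally terse about it, but it is worth noting that ``saturation of $L$'' is $K_4$ only up to this extra lower-dimensional piece.
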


\begin{proof}
The set $K_4$ is the cross product of $K_3$ and an interval $[0,1]$, modulo the set of Hausdorff dimension $3$ that does not affect on it's Lebesgue $4$-measure. Hence all the assertions of the lemma follow from the corresponding statements of Lemma~\ref{3}. 
\end{proof}

\subsection{Proof of the Theorem}
\begin{proof}[Proof of the theorem]
The theorem can be easily deduced from Lemma~\ref{4}.
\textbf{Step 1.}
To construct an example of endomorphism on a $4$-manifold with a piecewise smooth boundary into itself, for which the SET fails, one should take the map $H$ that has a global SRB measure ($\delta$-measure sitting at $s_4$), and any function $\phi$ such that $\phi(s_4)=0$ and $\phi(t)=1$ $\forall t\in K_4$. Then $K_{\phi,1}=K_4$, and hence, by Lemma~4, the SET doesn't hold for $H$.

Then we make three simple improvements. First, we construct a similar example on the the manifold $\widetilde M$ with a $C^\infty$-smooth boundary.
\textbf{Step 2.}
The manifold $M$ has a piecewise smooth boundary. The non-smooth set of the boundary lies on the cross-section $M_{-1}$. It consists of the boundary (in $M_{-1}$) of the set $N$ of points that are glued by the equivalence~\eqref{eq:equiv4}, see~\eqref{eq:N}. Then one can link any $4$-regions to a small neighbourhood of this set bounded by some $C^\infty$-hypersurfaces that $C^\infty$-smoothly link to the boundary of $M$ (see Figure~5). Thus we obtain a manifold $\widetilde M$ that has a $C^\infty$-smooth boundary. The flow can be naturally extended to $\widetilde M$ by the same formula $(1,0,0,0)$ for the generator (in the natural chart in a neighborhood of $N$ in $\mathbb R^4$). Obviously, the SET still does not hold for time $1$ map $\widetilde H$ of this flow on $\widetilde M$.

\begin{center}
\scalebox{1.2} 
{
\begin{pspicture}(0,-2.65)(9.937813,2.64)
\definecolor{color898b}{rgb}{0.6,0.6,0.6}
\pscustom[linewidth=0.02,fillstyle=solid,fillcolor=color898b]
{
\newpath
\moveto(3.9078126,1.25)
\lineto(3.9178126,1.21)
\curveto(3.9228125,1.19)(3.9278126,1.145)(3.9278126,1.12)
\curveto(3.9278126,1.095)(3.9228125,1.045)(3.9178126,1.02)
\curveto(3.9128125,0.995)(3.9078126,0.945)(3.9078126,0.92)
\curveto(3.9078126,0.895)(3.9078126,0.845)(3.9078126,0.82)
\curveto(3.9078126,0.795)(3.9078126,0.745)(3.9078126,0.72)
\curveto(3.9078126,0.695)(3.9078126,0.645)(3.9078126,0.62)
\curveto(3.9078126,0.595)(3.9078126,0.545)(3.9078126,0.52)
\curveto(3.9078126,0.495)(3.9078126,0.445)(3.9078126,0.42)
\curveto(3.9078126,0.395)(3.9078126,0.345)(3.9078126,0.32)
\curveto(3.9078126,0.295)(3.9078126,0.245)(3.9078126,0.22)
\curveto(3.9078126,0.195)(3.9078126,0.145)(3.9078126,0.12)
\curveto(3.9078126,0.095)(3.9128125,0.05)(3.9178126,0.03)
\curveto(3.9228125,0.01)(3.9278126,-0.035)(3.9278126,-0.06)
\curveto(3.9278126,-0.085)(3.9278126,-0.135)(3.9278126,-0.16)
\curveto(3.9278126,-0.185)(3.9278126,-0.235)(3.9278126,-0.26)
\curveto(3.9278126,-0.285)(3.9228125,-0.33)(3.9178126,-0.35)
\curveto(3.9128125,-0.37)(3.9078126,-0.415)(3.9078126,-0.44)
\curveto(3.9078126,-0.465)(3.9078126,-0.515)(3.9078126,-0.54)
\curveto(3.9078126,-0.565)(3.9078126,-0.615)(3.9078126,-0.64)
\curveto(3.9078126,-0.665)(3.9128125,-0.715)(3.9178126,-0.74)
\curveto(3.9228125,-0.765)(3.9278126,-0.815)(3.9278126,-0.84)
\curveto(3.9278126,-0.865)(3.9328125,-0.89)(3.9378126,-0.89)
\curveto(3.9428124,-0.89)(3.9578125,-0.875)(3.9678125,-0.86)
\curveto(3.9778125,-0.845)(3.9928124,-0.805)(3.9978125,-0.78)
\curveto(4.0028124,-0.755)(4.0178127,-0.71)(4.0278125,-0.69)
\curveto(4.0378127,-0.67)(4.0528126,-0.63)(4.0578127,-0.61)
\curveto(4.0628123,-0.59)(4.0728126,-0.55)(4.0778127,-0.53)
\curveto(4.0828123,-0.51)(4.0978127,-0.475)(4.1078124,-0.46)
\curveto(4.1178126,-0.445)(4.1328125,-0.41)(4.1378126,-0.39)
\curveto(4.1428127,-0.37)(4.1578126,-0.33)(4.1678123,-0.31)
\curveto(4.1778126,-0.29)(4.1928124,-0.25)(4.1978126,-0.23)
\curveto(4.2028127,-0.21)(4.2178125,-0.175)(4.2278123,-0.16)
\curveto(4.2378125,-0.145)(4.2578125,-0.115)(4.2678127,-0.1)
\curveto(4.2778125,-0.085)(4.2978125,-0.055)(4.3078127,-0.04)
\curveto(4.3178124,-0.025)(4.3328123,0.01)(4.3378124,0.03)
\curveto(4.3428125,0.05)(4.3578124,0.085)(4.3678126,0.1)
\curveto(4.3778124,0.115)(4.4028125,0.145)(4.4178123,0.16)
\curveto(4.4328127,0.175)(4.4578123,0.205)(4.4678125,0.22)
\curveto(4.4778123,0.235)(4.4978123,0.265)(4.5078125,0.28)
\curveto(4.5178127,0.295)(4.5378127,0.325)(4.5478125,0.34)
\curveto(4.5578127,0.355)(4.5828123,0.38)(4.5978127,0.39)
\curveto(4.6128125,0.4)(4.6378126,0.425)(4.6478124,0.44)
\curveto(4.6578126,0.455)(4.6828127,0.48)(4.6978126,0.49)
\curveto(4.7128124,0.5)(4.7378125,0.525)(4.7478123,0.54)
\curveto(4.7578125,0.555)(4.7778125,0.585)(4.7878127,0.6)
\curveto(4.7978125,0.615)(4.8278127,0.64)(4.8478127,0.65)
\curveto(4.8678126,0.66)(4.9028125,0.68)(4.9178123,0.69)
\curveto(4.9328127,0.7)(4.9678125,0.72)(4.9878125,0.73)
\curveto(5.0078125,0.74)(5.0378127,0.755)(5.0478125,0.76)
\curveto(5.0578127,0.765)(5.0828123,0.78)(5.0978127,0.79)
\curveto(5.1128125,0.8)(5.1478124,0.82)(5.1678123,0.83)
\curveto(5.1878123,0.84)(5.2228127,0.865)(5.2378125,0.88)
\curveto(5.2528124,0.895)(5.2828126,0.92)(5.2978125,0.93)
\curveto(5.3128123,0.94)(5.3428125,0.96)(5.3578124,0.97)
\curveto(5.3728123,0.98)(5.4128127,0.99)(5.4378123,0.99)
\curveto(5.4628124,0.99)(5.5078125,1.0)(5.5278125,1.01)
\curveto(5.5478125,1.02)(5.5878124,1.04)(5.6078124,1.05)
\curveto(5.6278124,1.06)(5.6678123,1.075)(5.6878123,1.08)
\curveto(5.7078123,1.085)(5.7478123,1.095)(5.7678127,1.1)
\curveto(5.7878127,1.105)(5.8278127,1.115)(5.8478127,1.12)
\curveto(5.8678126,1.125)(5.9078126,1.135)(5.9278126,1.14)
\curveto(5.9478126,1.145)(5.9878125,1.155)(6.0078125,1.16)
\curveto(6.0278125,1.165)(6.0678124,1.175)(6.0878124,1.18)
\curveto(6.1078124,1.185)(6.1478124,1.195)(6.1678123,1.2)
\curveto(6.1878123,1.205)(6.2278123,1.215)(6.2478123,1.22)
\curveto(6.2678127,1.225)(6.2978125,1.235)(6.3278127,1.25)
}
\psline[linewidth=0.06cm](2.0478125,1.25)(9.907812,1.25)
\psline[linewidth=0.06cm](3.9278126,-2.49)(3.9078126,2.61)
\psbezier[linewidth=0.08](9.887813,1.23)(8.987812,1.23)(8.867812,1.25)(6.6878123,1.23)(4.5078125,1.21)(3.9078126,-0.45)(3.9278126,-1.53)(3.9478126,-2.61)(3.9278126,-1.63)(3.9278126,-2.51)
\psline[linewidth=0.03cm](4.5078125,0.25)(4.5078125,2.61)
\psline[linewidth=0.03cm](5.1078124,0.83)(5.1078124,2.61)
\psline[linewidth=0.03cm](5.7078123,1.11)(5.7078123,2.61)
\psline[linewidth=0.03cm](6.3078127,1.23)(6.3078127,2.61)
\psline[linewidth=0.03cm](6.9078126,1.25)(6.9078126,2.61)
\psline[linewidth=0.03cm](7.4878125,1.23)(7.4878125,2.61)
\psline[linewidth=0.03cm](8.087812,1.25)(8.087812,2.61)
\psline[linewidth=0.03cm](8.687813,1.27)(8.687813,2.61)
\psline[linewidth=0.03cm](9.287812,1.23)(9.287812,2.61)
\psline[linewidth=0.03cm](3.2878125,-2.45)(3.3078125,2.61)
\psline[linewidth=0.03cm](2.6878126,-2.45)(2.7078125,2.61)
\psline[linewidth=0.03cm](2.0878124,-2.45)(2.1078124,2.61)
\psline[linewidth=0.01cm,arrowsize=0.05291667cm 8.0,arrowlength=3.0,arrowinset=0.4]{->}(3.9078126,2.11)(3.9078126,2.35)
\psline[linewidth=0.01cm,arrowsize=0.05291667cm 8.0,arrowlength=3.0,arrowinset=0.4]{->}(4.5078125,0.87)(4.5078125,1.11)
\psline[linewidth=0.01cm,arrowsize=0.05291667cm 8.0,arrowlength=3.0,arrowinset=0.4]{->}(2.1078124,2.07)(2.1078124,2.31)
\psline[linewidth=0.01cm,arrowsize=0.05291667cm 8.0,arrowlength=3.0,arrowinset=0.4]{->}(2.7078125,2.07)(2.7078125,2.31)
\psline[linewidth=0.01cm,arrowsize=0.05291667cm 8.0,arrowlength=3.0,arrowinset=0.4]{->}(3.3078125,2.07)(3.3078125,2.31)
\psline[linewidth=0.01cm,arrowsize=0.05291667cm 8.0,arrowlength=3.0,arrowinset=0.4]{->}(4.5078125,2.09)(4.5078125,2.33)
\psline[linewidth=0.01cm,arrowsize=0.05291667cm 8.0,arrowlength=3.0,arrowinset=0.4]{->}(5.1078124,2.09)(5.1078124,2.33)
\psline[linewidth=0.01cm,arrowsize=0.05291667cm 8.0,arrowlength=3.0,arrowinset=0.4]{->}(5.7078123,2.09)(5.7078123,2.33)
\psline[linewidth=0.01cm,arrowsize=0.05291667cm 8.0,arrowlength=3.0,arrowinset=0.4]{->}(6.3078127,2.07)(6.3078127,2.31)
\psline[linewidth=0.01cm,arrowsize=0.05291667cm 8.0,arrowlength=3.0,arrowinset=0.4]{->}(6.9078126,2.07)(6.9078126,2.31)
\psline[linewidth=0.01cm,arrowsize=0.05291667cm 8.0,arrowlength=3.0,arrowinset=0.4]{->}(7.4878125,2.09)(7.4878125,2.33)
\psline[linewidth=0.01cm,arrowsize=0.05291667cm 8.0,arrowlength=3.0,arrowinset=0.4]{->}(8.087812,2.11)(8.087812,2.35)
\psline[linewidth=0.01cm,arrowsize=0.05291667cm 8.0,arrowlength=3.0,arrowinset=0.4]{->}(8.687813,2.09)(8.687813,2.33)
\psline[linewidth=0.01cm,arrowsize=0.05291667cm 8.0,arrowlength=3.0,arrowinset=0.4]{->}(9.287812,2.09)(9.287812,2.33)
\psline[linewidth=0.01cm,arrowsize=0.05291667cm 8.0,arrowlength=3.0,arrowinset=0.4]{->}(5.1078124,1.13)(5.1078124,1.37)
\psline[linewidth=0.01cm,arrowsize=0.05291667cm 8.0,arrowlength=3.0,arrowinset=0.4]{->}(3.9078126,0.49)(3.9078126,0.73)
\psline[linewidth=0.01cm,arrowsize=0.05291667cm 8.0,arrowlength=3.0,arrowinset=0.4]{->}(3.2878125,-0.13)(3.2878125,0.11)
\psline[linewidth=0.01cm,arrowsize=0.05291667cm 8.0,arrowlength=3.0,arrowinset=0.4]{->}(2.6878126,-0.51)(2.6878126,-0.27)
\psline[linewidth=0.01cm,arrowsize=0.05291667cm 8.0,arrowlength=3.0,arrowinset=0.4]{->}(2.0878124,-0.95)(2.0878124,-0.71)
\psbezier[linewidth=0.02,arrowsize=0.05291667cm 4.0,arrowlength=2.0,arrowinset=0.4]{->}(7.2678127,0.31)(6.6478124,0.95)(4.9078126,1.19)(4.2678127,0.67)
\psbezier[linewidth=0.02,arrowsize=0.05291667cm 4.0,arrowlength=2.0,arrowinset=0.4]{->}(5.6278124,-1.11)(4.8078127,-0.87)(4.6278124,-0.59)(4.4078126,0.05)
\psbezier[linewidth=0.02,arrowsize=0.05291667cm 4.0,arrowlength=2.0,arrowinset=0.4]{->}(0.8878125,1.75)(1.6478125,2.59)(2.1878126,1.83)(2.5678124,1.31)
\usefont{T1}{ptm}{m}{n}
\rput(0.9109375,1.66){}
\usefont{T1}{ptm}{m}{n}
\rput(0.92921877,1.32){$M_{-1}$}
\usefont{T1}{ptm}{m}{n}
\rput(6.3301563,-1.04){}
\usefont{T1}{ptm}{m}{n}
\rput(6.49,-1.32){$\partial\widetilde M$}
\usefont{T1}{ptm}{m}{n}
\rput(7.6834373,-0.22){$\widetilde M\setminus M$}
\usefont{T1}{ptm}{m}{n}
\rput(7.310625,0.2){}
\end{pspicture} 
}
\\
Fig. 5: slow flow on the smoothened manifold $\widetilde M$
\end{center}

\textbf{Step 3.}
The time $1$ map $\widetilde H$ of the described flow is not invertible. To make it invertible, we use the ''slow down procedure'' near the boundary of $\widetilde M$. Namely, we multiply the generator of the flow by a $C^\infty$-function 
that is equal to $0$ on the boundary $\partial \widetilde M$ of $\widetilde M$ and is positive in its interior $\interior \widetilde M$ (see Figure~5 again).

For the function $\phi$ described above, this procedure doesn't change neither the Hausdorff dimension of the $(\phi,1)$-nontypical set of the time $1$ map
of a flow, nor Lebesgue $4$-measure of the basin of attraction of $s_4$. Indeed, for every point of $\interior \widetilde M$ its positive semiorbit of the flow action does not intersect the boundary of $\widetilde M$. Therefore,
the $(\phi,1)$-nontypical set and the basin of attraction of $s_4$ for these two flows (''fast flow'' and ''slow flow'') can differ only by subsets of $\partial \widetilde M$. But $\partial \widetilde M$ has dimension $3$ and doesn't affect on the $4$-measure and on Hausdorff dimenion of the larger sets (sets of Hausdorff dimension more than $3$).

We constructed an example of a $C^\infty$-\textit{diffeomorphism} $H_{slow}$ of a compact $4$-manifold with a smooth boundary, for which the SET doesn't hold.

\textbf{Step 4.}
Note that in the previous example the boundary of $\partial \widetilde M$ totally consists of fixed points of $H_{slow}$ (one of these points, $s_4$, is a support of a global SRB measure). To obtain an example on a boundaryless manifold, we consider the double of the manifold $\widetilde M$. Namely, we take two copies of $\widetilde M$ and glue the boundaries of these copies by the natural ''identical'' map. The two points $s_4$ on the copies glue together. Our diffeomorphism $H_{slow}$ naturally extends to the diffeomorphsm $\mathcal H$ of the new manifold $\mathcal M$ (because $\partial \widetilde M$ is fixed for $H_{slow}$). Obviously, $\mathcal H$ has a global SRB measure in $\mathcal M$ (the glued point $s_4$), but the SET does not hold for it, as well as for $H_{slow}$.
\end{proof}

\subsection{Topological type of the manifold}
Our construction shows that $4$-dimensional manifold $M$ is homeomorphic to a neighborhood in $\mathbb R^4$ of union of two circles. It is also homeomorphic to the direct product of a filled pretzel and an interval. The same can be said about $\widetilde M$. Hence, the manifold $\mathcal M$ is a double of the direct product of a filled pretzel and an interval.

$\mathcal M$ can also be described as a connected sum of two $S^3\times S^1$. Indeed, the filled pretzel is a connected sum of two solid tori $D^2\times S^1$. Hence, the direct product of a filled pretzel and an interval is homeomorphic to the connected sum of two $D^3\times S^1$. But 
the doubling operation and the operation of taking a connected sum (of two equal manifolds) topologically commute. The doubling of $D^3\times S^1$ is obviously homeomorphic to $S^3\times S^1$ (as far as the doubling of every $n$-disk $D^n$ is $S^n$). Therefore, $\mathcal M$ is homeomorphic to a connected sum of two $S^3\times S^1$.

\begin{small}

\end{small}

\end{document}